\documentclass{amsart}
\usepackage{graphicx}
\usepackage{amsmath,amssymb,amsfonts}
\newtheorem{theorem}{Theorem}[section]
\newtheorem{lemma}[theorem]{Lemma}
\newtheorem{coro}[theorem]{Corollary}

\theoremstyle{definition}
\newtheorem{definition}[theorem]{Definition}

\numberwithin{figure}{section}
\theoremstyle{remark}
\newtheorem{remark}[theorem]{Remark}

\numberwithin{equation}{section}



\def \A{\mathcal{A}}

\def \Q{\mathcal{Q}}
\def \la{\lambda}
\def \I{\mathcal{I}}

\begin{document}
\title[Least H-eigenvalue of hypergraphs]{The least H-eigenvalue of signless Laplacian of non-odd-bipartite hypergraphs}

\author[Y.-Z. Fan]{Yi-Zheng Fan$^*$}
\address{School of Mathematical Sciences, Anhui University, Hefei 230601, P. R. China}
\email{fanyz@ahu.edu.cn}
\thanks{$^*$The corresponding author. This work was supported by National Natural Science Foundation of China (Grant No. 11871073, 11771016).
}

\author[J.-C. Wang]{Jiang-Chao Wan}
\address{School of Mathematical Sciences, Anhui University, Hefei 230601, P. R. China}
\email{1500256209@qq.com}

\author[Y. Wang]{Yi Wang}
\address{School of Mathematical Sciences, Anhui University, Hefei 230601, P. R. China}
\email{wangy@ahu.edu.cn}

\subjclass[2000]{Primary 15A18, 05C65; Secondary 13P15, 05C15}

\begin{abstract}
Let $G$ be a connected non-odd-bipartite hypergraph with even uniformity.
The least H-eigenvalue of the signless Laplacian tensor of $G$ is simply called the least eigenvalue of $G$
  and the corresponding H-eigenvectors are called the first eigenvectors of $G$.
In this paper we give some numerical and structural properties about the first eigenvectors of $G$ which contains an odd-bipartite branch,
and investigate how the least eigenvalue of $G$ changes when an odd-bipartite branch attached at one vertex is relocated to another vertex.
We characterize the hypergraph(s) whose least eigenvalue attains the minimum
among a certain class of hypergraphs which contain a fixed non-odd-bipartite connected hypergraph.
Finally we present some upper bounds of the least eigenvalue and prove that zero is the least limit point of the least eigenvalues of connected non-odd-bipartite hypergraphs.
\end{abstract}

\subjclass[2010]{Primary 15A18, 05C65; Secondary 13P15, 14M99}

\keywords{Hypergraph, signless Laplacian tensor, least H-eigenvalue, eigenvector, odd-bipartite, perturbation}

\maketitle

\section{Introduction}
Since Lim \cite{Lim} and Qi \cite{Qi2} independently introduced the eigenvalues of tensors or hypermatrices in 2005,
the spectral theory of tensors developed rapidly,
  especially the well-known Perron-Frobenius theorem of nonnegative matrices was generalized to nonnegative tensors \cite{CPZ, FGH, YY1, YY2, YY3}.
The signless Laplacian tensors $\Q(G)$ \cite{Qi}
were introduced to investigating the structure of hypergraphs, just like signless Laplacian matrices to simple graphs.
As $\Q(G)$ is nonnegative, by using Perron-Frobenius theorem, many results about its spectral radius are presented
\cite{HQX,KF,LSQ,LMZW,YQS}.

Let $G$ be a $k$-uniform connected hypergraph. 
Shao et al. \cite{SSW} prove that zero is an H-eigenvalue of $\Q(G)$ if and only if $k$ is even and $G$ is odd-bipartite.
Some other equivalent conditions are summarized in \cite{FY}.
Note that zero is an eigenvalue  of $\Q(G)$ if and only if $k$ is even and $G$ is odd-colorable \cite{FY}.
So, there exist odd-colorable but non-odd-bipartite hypergraphs \cite{FKT,Ni}, for which zero is an N-eigenvalue.
Hu and Qi \cite{Hu} discuss the H-eigenvectors of zero eigenvalue of $\Q(G)$ related to the odd-bipartitions of $G$,
and use N-eigenvectors of zero eigenvalue of $\Q(G)$ to discuss some kinds of partition of $G$,
where an eigenvector is called \emph{H}-(or \emph{N}-)\emph{eigenvector} if it can (or cannot) be scaled into a real vector.

Except the above work, the least H-eigenvalue of $\Q(G)$ receives little attention.
In this paper, we focus on the least H-eigenvalue of $\Q(G)$.
Qi \cite{Qi2} proved that each eigenvalue of $\Q(G)$ of a connected $k$-uniform hypergraph $G$ has a nonnegative real part by using Gershgorin disks, which implies that
the least H-eigenvalue of  $\Q(G)$ is at least zero, and is zero if and only if $k$ is even and $G$ is odd-bipartite.
If $k$ is even, then $\Q(G)$ are positive semi-definite \cite{Qi}, and its least H-eigenvalue is a solution of minimum problem over a real unit sphere; see Eq. (\ref{form2}).
So, throughout of this paper, when discussing the least H-eigenvalue of $\Q(G)$,
we always assume that \emph{$G$ is connected non-odd-bipartite with even uniformity $k$}.
For convenience, the least H-eigenvalue of $\Q(G)$ is simply called the \emph{least eigenvalue} of $G$
 and the corresponding H-eigenvectors are called the \emph{first  eigenvectors} of $G$.

In this paper we give some numerical and structural properties about the first  eigenvectors of $G$ which contains an odd-bipartite branch,
and investigate how the least eigenvalue of $G$ changes when an odd-bipartite branch
attached at one vertex is relocated to another vertex.
We characterize the hypergraph(s) whose least  eigenvalue attains the minimum
among a certain class of hypergraphs which contain a fixed non-odd-bipartite connected hypergraph.
Finally we present some upper bounds of the least  eigenvalue and prove that zero is the least limit point of the least  eigenvalues of connected non-odd-bipartite hypergraphs.
The perturbation result on the least  eigenvalue in this paper is a generalization of that on the least eigenvalue of the signless Laplacian matrix of a simple graph in \cite{WF}.

\section{Preliminaries}

\subsection{Eigenvalues of tensors}
A real {\it tensor} (also called \emph{hypermatrix}) $\A=(a_{i_{1} i_2 \ldots i_{k}})$ of order $k$ and dimension $n$ refers to a
  multi-dimensional array with entries $a_{i_{1}i_2\ldots i_{k}}\in \mathbb{R}$
  for all $i_{j}\in [n]:=\{1,2,\ldots,n\}$ and $j\in [k]$.
Obviously, if $k=2$, then $\A$ is a square matrix of dimension $n$.
The tensor $\A$ is called \emph{symmetric} if its entries are invariant under any permutation of their indices.

 Given a vector $x\in \mathbb{C}^{n}$, $\A x^{k} \in \mathbb{C}$ and $\A x^{k-1} \in \mathbb{C}^n$, which are defined as follows:
\begin{align*}
\A x^{k} & =\sum_{i_1,i_{2},\ldots,i_{k}\in [n]}a_{i_1i_{2}\ldots i_{k}}x_{i_1}x_{i_{2}}\cdots x_{i_k},\\
(\A x^{k-1})_i & =\sum_{i_{2},\ldots,i_{k}\in [n]}a_{ii_{2}\ldots i_{k}}x_{i_{2}}\cdots x_{i_k}, i \in [n].
\end{align*}

Let $\mathcal{I}=(i_{i_1i_2\ldots i_k})$ be the {\it identity tensor} of order $k$ and dimension $n$, that is, $i_{i_{1}i_2 \ldots i_{k}}=1$ if
   $i_{1}=i_2=\cdots=i_{k} \in [n]$ and $i_{i_{1}i_2 \ldots i_{k}}=0$ otherwise.

\begin{definition}[\cite{Lim,Qi2}]
Let $\A$ be a real tensor of order $k$ dimension $n$.
For some $\lambda \in \mathbb{C}$, if the polynomial system $(\lambda \mathcal{I}-\A)x=0$,
or equivalently $\A x^{k-1}=\lambda x^{[k-1]}$, has a solution $x \in \mathbb{C}^{n}\backslash \{0\}$,
then $\lambda $ is called an \emph{eigenvalue} of $\A$ and $x$ is an \emph{eigenvector} of $\A$ associated with $\lambda$,
where $x^{[k-1]}:=(x_1^{k-1}, x_2^{k-1},\ldots,x_n^{k-1})$.
\end{definition}

 In the above definition, $(\lambda,x)$ is called an \emph{eigenpair} of $\A$.
 If $x$ is a real eigenvector of $\A$, surely the corresponding eigenvalue $\lambda$ is real.
In this case, $\lambda$ is called an {\it H-eigenvalue} of $\A$.
Denote by $\lambda_{\min}(\A)$ the least H-eigenvalue of $\A$.

A real tensor $\A$ of even order $k$ is called \emph{positive semidefinite}  (or \emph{positive definite}) if for any $x\in \mathbb{R}^{n} \backslash \{0\}$,
$\A x^k \ge 0$ (or $\A x^k > 0$).

\begin{lemma}[\cite{Qi2}, Theorem 5] \label{semi}
Let $\A$ be a real symmetric tensor of order $k$ and dimension $n$, where $k$ is even.
Then the following results hold.

\begin{enumerate}

\item $\A$ always has H-eigenvalues, and $\A$ is positive definite (or positive semidefinite) if and only if its least H-eigenvalue is positive (or nonnegative).

\item $\lambda_{\min}(\A)=\min\{\A x^k: x\in \mathbb{R}^{n}, \|x\|_k=1\}$,
where $\|x\|_k=\left(\sum_{i=1}^{n} |x_i|^k\right)^{1 \over k}$.
Furthermore, $x$ is an optimal solution of the above optimization
if and only if it is an eigenvector of $\A$ associated with $\lambda_{\min}(\A)$.
\end{enumerate}
\end{lemma}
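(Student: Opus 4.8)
The plan is to reduce both parts to the compactness of the $\ell^k$-unit sphere together with the Lagrange multiplier conditions. Set $S=\{x\in\mathbb{R}^n:\|x\|_k=1\}$. Since $k$ is even, $\|x\|_k^k=\sum_i x_i^k$ is a smooth function, so $S$ is a compact hypersurface, and $x\mapsto \A x^k$ is a polynomial and hence continuous; therefore the infimum $\mu:=\min_{x\in S}\A x^k$ is attained at some $x^*\in S$. Once I show $x^*$ is an eigenvector, the existence of H-eigenvalues in part (1) follows.

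Next I would apply the method of Lagrange multipliers to $\min \A x^k$ subject to $g(x):=\sum_i x_i^k-1=0$. The constraint qualification holds: $\nabla g(x)=k\,x^{[k-1]}$, and at any $x\in S$ one has $\sum_i x_i^k=1>0$, so some coordinate $x_i\neq 0$ and hence $\nabla g(x)\neq 0$ (here I use that $k-1$ is odd, so $x_i^{k-1}$ and $x_i$ have the same sign; in particular $S=g^{-1}(0)$ is a genuine smooth manifold). By the symmetry of $\A$, $\nabla(\A x^k)=k\,\A x^{k-1}$, so at the minimizer there is $\lambda\in\mathbb{R}$ with $\A x^{*\,k-1}=\lambda\, x^{*[k-1]}$; that is, $(\lambda,x^*)$ is an H-eigenpair. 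Contracting this identity once more with $x^*$ gives $\mu=\A x^{*k}=\sum_i x^*_i(\A x^{*\,k-1})_i=\lambda\sum_i (x^*_i)^k=\lambda$, so the minimum value is itself an H-eigenvalue of $\A$.

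To identify $\mu$ with $\lambda_{\min}(\A)$, take an arbitrary H-eigenpair $(\nu,y)$ with $y\in\mathbb{R}^n\setminus\{0\}$. Scaling $y$ by a nonzero constant $c$ preserves the relation $\A y^{k-1}=\nu y^{[k-1]}$ (both sides scale by $c^{k-1}$), so I may assume $y\in S$; then $\A y^k=\nu\sum_i y_i^k=\nu$, and $\A y^k\ge\mu$ forces $\nu\ge\mu$. Hence $\mu=\lambda_{\min}(\A)$, which is the first assertion of part (2). Part (1) then follows by homogeneity: $\A(cx)^k=c^k\A x^k$ with $c^k>0$ for $c\neq 0$ (as $k$ is even), so $\A x^k\ge 0$ (resp.\ $>0$) for all $x\neq 0$ if and only if the same holds on $S$, i.e.\ if and only if $\lambda_{\min}(\A)=\mu\ge 0$ (resp.\ $>0$). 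For the ``furthermore'' clause: if $x\in S$ is optimal, then by the Lagrange step it is an eigenvector, with eigenvalue $\A x^k=\mu=\lambda_{\min}(\A)$; conversely, if $x$ is any eigenvector associated with $\lambda_{\min}(\A)$, rescale so that $x\in S$, and then $\A x^k=\lambda_{\min}(\A)\sum_i x_i^k=\mu$, so $x$ is optimal.

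The only genuinely delicate point — and the step I would be most careful about — is the constraint qualification and the sign bookkeeping for $x^{[k-1]}$ when $k$ is even (so $k-1$ is odd); the rest is routine compactness, homogeneity, and the contraction identity $\A x^k=\sum_i x_i(\A x^{k-1})_i$. As an alternative to invoking Lagrange multipliers on $S$, one could minimize the homogeneous quotient $\A x^k/\|x\|_k^k$ over $x\neq 0$ and write down its first-order stationarity condition, but this amounts to the same computation.
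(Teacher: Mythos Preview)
Your argument is correct. Note, however, that the paper does not actually prove this lemma: it is quoted verbatim as Theorem~5 of Qi~\cite{Qi2} and used as a black box. Your proof is the standard variational argument (compactness of the $\ell^k$-unit sphere plus Lagrange multipliers), and it is essentially the same route Qi takes in the original reference; there is nothing to compare against in the present paper itself. One small cosmetic point: in the ``furthermore'' clause you write ``rescale so that $x\in S$'', but the optimization is already over $S$, so the converse direction should simply read: if $x\in S$ is an eigenvector for $\lambda_{\min}(\A)$, then $\A x^k=\lambda_{\min}(\A)\sum_i x_i^k=\mu$, hence $x$ is optimal. Otherwise the proof is complete as written.
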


\subsection{Uniform hypergraphs}
A {\it hypergraph} $G=(V(G),E(G))$ is a pair consisting of a vertex set $V(G)=\{v_1,v_2,\ldots,v_n\}$ and an edge set $E(G)=\{e_{1},e_2,\ldots,e_m\}$,
where $e_j\subseteq V(G)$ for each $j\in[m]$.
If $|e_j|=k$ for all $j\in[m]$, then $G$ is called a {\it $k$-uniform} hypergraph.
The {\it degree} $d_G(v)$ or simply $d(v)$ of a vertex $v \in V(G)$ is defined as $d(v)=|\{e_{j}:v\in e_{j}\}|$.
The {\it order} of  $G $ is the cardinality of $V(G)$, denoted by $\nu(G)$, and its {\it size} is the cardinality of $E(G)$, denoted by $\varepsilon(G)$.
A {\it walk} in a $G$ is a sequence of alternate vertices and edges: $v_0e_1v_1e_2 \ldots e_lv_l$,
where ${v_i, v_{i+1}}\in e_i$ for $i = 0, 1, \ldots, l-1$.
A walk is called a {\it path} if all the vertices and edges appeared on the walk are distinct.
A hypergraph $G$ is called {\it connected} if any two vertices of $G$ are connected by a walk or path.

If a hypergraph is both connected and acyclic, it is called a \emph{hypertree}.
The {\it $k$-th power} of a simple graph $H$, denoted by $H^k$,
is obtained from $H$ by replacing each edge (a $2$-set) with a $k$-set by adding $(k-2)$ additional vertices \cite{HQS}.
The $k$-th power of a tree is called \emph{power hypertree}, which is surely a $k$-uniform hypertree.
In particular, the $k$-th power of a path $P_m$ (respectively, a star $S_m$)  (as a simple graph) with $m$ edges is called a \emph{hyperpath}  (respectively, \emph{hyperstar}), denote by $P_m^k$ (respectively, $S_m^k$).
In a $k$-th power hypertree $T$, an edge is called a \emph{pendent edge} of $T$ if it contains $k-1$ vertices of degree one,
which are called the \emph{pendent vertices} of $T$.

\begin{lemma}[\cite{Ber1}]\label{tree}
Let $G$ be a connected $k$-uniform hypergraph.
Then $G$ is a hypertree if and only if $\varepsilon(G)=\frac{\nu(G)-1}{k-1}.$
\end{lemma}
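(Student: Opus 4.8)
The plan is to establish the slightly stronger assertion that \emph{every} connected $k$-uniform hypergraph $G$ satisfies $\varepsilon(G)\ge\frac{\nu(G)-1}{k-1}$, with equality if and only if $G$ is acyclic; the lemma is then immediate from the definition of a hypertree. The tool throughout is a \emph{connectivity-respecting edge ordering}: since $G$ is connected, its edge-intersection graph is connected (a walk in $G$ between $u\in e$ and $v\in f$ gives rise to a walk, hence a path, from $e$ to $f$ in that graph), so one may list $E(G)=\{e_1,\dots,e_m\}$ in such a way that for each $i\ge2$ the edge $e_i$ meets $V(G_{i-1})$, where $G_j$ denotes the sub-hypergraph with edge set $\{e_1,\dots,e_j\}$. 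A routine induction shows every $G_j$ is connected, and since $\nu(G_1)=k$ and $\nu(G_i)\le\nu(G_{i-1})+(k-1)$ for $i\ge2$, one obtains $\nu(G)\le k+(m-1)(k-1)=m(k-1)+1$, i.e. $\varepsilon(G)\ge\frac{\nu(G)-1}{k-1}$ unconditionally (the case $m=0$ being trivial).

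Next I would prove that if $G$ is a hypertree then equality holds. The claim is that in any ordering as above one has $\abs{e_i\cap V(G_{i-1})}=1$ for every $i\ge2$. It is at least $1$ by the choice of ordering; if it were at least $2$, choose distinct $u,v\in e_i\cap V(G_{i-1})$, take a path in the connected hypergraph $G_{i-1}$ joining $u$ to $v$ — it uses only edges among $e_1,\dots,e_{i-1}$ and hence avoids $e_i$ — and close it off with $e_i$; this produces a cycle in $G$, contradicting acyclicity. Hence $\nu(G_i)=\nu(G_{i-1})+(k-1)$ at every step, so $\nu(G)=m(k-1)+1$ and $\varepsilon(G)=\frac{\nu(G)-1}{k-1}$.

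Conversely, suppose $G$ is connected but not acyclic, and fix a cycle $w_0\,c_1\,w_1\,c_2\,w_2\cdots c_l\,w_l=w_0$ with $l\ge2$, pairwise distinct edges $c_1,\dots,c_l$ and distinct vertices $w_0,\dots,w_{l-1}$. I would order the edges of $G$ by listing $c_1,\dots,c_l$ first (admissible since $c_i$ meets $c_{i-1}$) and then extending to all remaining edges by the connectivity argument above. When $c_l$ is appended, $V(G_{l-1})$ already contains the two distinct vertices $w_{l-1}\in c_{l-1}\cap c_l$ and $w_0\in c_1\cap c_l$, so $\nu(G_l)\le\nu(G_{l-1})+(k-2)$, while every other step adds at most $k-1$ new vertices; summing gives $\nu(G)\le m(k-1)$, whence $\varepsilon(G)>\frac{\nu(G)-1}{k-1}$. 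Thus equality forces $G$ to be acyclic, completing the proof.

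I expect the only genuine subtlety to be the cycle-detection step in the second paragraph: one must verify with care that a path in $G_{i-1}$ between two vertices of $e_i$, together with $e_i$, really is a cycle (distinct edges, length at least $2$, and distinct vertices apart from the endpoints), and likewise that a connected hypergraph with at least one edge always admits a connectivity-respecting edge ordering. These are the points where the hypotheses are actually consumed; everything else is bookkeeping with the elementary bound $\nu(G_i)\le\nu(G_{i-1})+(k-1)$.
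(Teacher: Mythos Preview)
The paper does not give its own proof of this lemma: it is stated with a citation to Berge's textbook and no argument is supplied. Your proposal is a correct, self-contained proof of the classical characterization, so there is nothing to compare against beyond noting that you have filled in what the paper leaves as a reference.

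Your argument is the standard one: build $G$ one edge at a time along a connectivity-respecting ordering, observe that each new edge contributes at most $k-1$ new vertices, and show that equality at every step is equivalent to acyclicity. The cycle-detection in the forward direction is fine---a path in $G_{i-1}$ from $u$ to $v$ followed by $e_i$ is a Berge cycle since $e_i\notin\{e_1,\dots,e_{i-1}\}$, the path has length at least $1$ (so the closed walk has length at least $2$), and its internal vertices are distinct from $u,v$. For the converse, placing the cycle edges $c_1,\dots,c_l$ first is legitimate because consecutive cycle edges intersect, and the extension to the remaining edges works since $G_l$ is connected and $G$ is connected. The one point you might state explicitly is that when $m\ge1$ connectivity forces every vertex to lie in some edge, so that $V(G)=V(G_m)$ and the telescoping count really captures $\nu(G)$; you implicitly use this but do not say it.
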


The odd-bipartite hypergraphs was introduced by Hu and Qi \cite{Hu}, which is considered as a generalization of the ordinary bipartite graphs.
The odd-bipartition is closely related to odd-traversal \cite{Ni}.

\begin{definition}[\cite{Hu}]
Let $k$ be even.
A $k$-uniform hypergraph $G=(V,E)$ is called \emph{odd-bipartite},
if there exists a bipartition $\{V_1,V_2\}$ of $V$ such that each edge of $G$ intersects $V_1$ (or $V_2$) in an odd number of vertices (such bipartition is called
an \emph{odd-bipartition} of $G$);
otherwise, $G$ is called \emph{non-odd-bipartite}.
\end{definition}

Let $G$ be a $k$-uniform hypergraph on $n$ vertices $v_1,v_2,\ldots,v_n$.
The {\it adjacency tensor} of $G$ \cite{CD} is defined as $\mathcal{A}(G)=(a_{i_{1}i_{2}\ldots i_{k}})$, an order $k$ dimensional $n$ tensor, where
\[a_{i_{1}i_{2}\ldots i_{k}}=\left\{
 \begin{array}{ll}
\frac{1}{(k-1)!}, &  \mbox{if~} \{v_{i_{1}},v_{i_{2}},\ldots,v_{i_{k}}\} \in E(G);\\
  0, & \mbox{otherwise}.
  \end{array}\right.
\]

 Let $\mathcal{D}(G)$ be a diagonal tensor of order $k$ and dimension $n$, where $d_{i\ldots i}=d(v_i)$ for $i \in [n]$.
The tensor $\Q(G)=\mathcal{D}(G)+\A(G)$ is called the {\it signless Laplacian tensor} of $G$ \cite{Qi}.
Observe that the adjacency (signless Laplacian) tensor of a hypergraph is symmetric.

Let $x=(x_1, x_2,\ldots,x_n) \in \mathbb{C}^n$.
Then $x$ can be considered as a function defined on the vertices of $G$,
 that is, each vertex $v_i$ is mapped to $x_i=:x_{v_ i}$.
 If $x$ is an eigenvector of $\Q(G)$, then it defines on $G$ naturally, i.e., $x_v$ is the entry of $x$ corresponding to $v$.
 If $G_0$ is a sub-hypergraph of $G$, denote by $x|_{G_0}$ the restriction of $x$ on the vertices of $G_0$, or a subvector of $x$ indexed by the vertices of $G_0$.

  Denote by $E_G(v)$, or simply $E(v)$, the set of edges of $G$ containing $v$.
 For a subset $U$ of $V(G)$, denote $x^U:=\Pi_{v \in U} x_u$, and $x_U^k:=\sum_{v \in U} x_u^k$.
 Then we have
 \begin{equation}\label{form}
  \Q(G)x^k = \sum_{e\in E(G)}(x_e^{k}+kx^e),
\end{equation}
 and for each $v \in V(G)$,
 $$(\Q(G)x^{k-1})_v=d(v)x_v^{k-1}+\sum_{e\in E(v)}x^{e\backslash \{v\}}.$$
So the eigenvector equation $\Q(G)x^{k-1}=\la x^{[k-1]}$ is equivalent to that for each $v \in V(G)$,
\begin{equation}\label{eigen}
(\la-d(v)) x_v^{k-1}=\sum_{e\in E(v)}x^{e\backslash \{v\}}.
\end{equation}
From Lemma \ref{semi}(2), if $k$ is even, then $\la_{\min}(G):=\lambda_{\min}(\Q(G))$ can be expressed as
\begin{equation}\label{form2}
\lambda_{\min}(G)=\min_{x\in \mathbb{R}^{n}, \|x\|_k=1}\sum_{e\in E(G)}(x_e^{k}+kx^e).
\end{equation}

Note that if $k$ is odd, the Eq. (\ref{form2}) does not hold.
The reason is as follows. If $G$ contains at least one edge, then by Perron-Frobenius theorem,
the spectral radius $\rho(\Q(G))$ of $\Q(G)$ is positive associated with a unit nonnegative eigenvector $x$.
Now $$\lambda_{\min}(G)\le \Q(G)(-x)^k=-\Q(G)x^k=-\rho(\Q(G))<0,$$ a contradiction as $\lambda_{\min}(G) \ge 0$ (see \cite[Theorem 3.1]{Qi}).

\begin{lemma}\label{xuv}
Let $G$ be a $k$-uniform hypergraph, and $(\lambda,x)$ be an eigenpair of $\Q(G)$.
If $E(u)=E(v)$ and $\la \ne d(u)$, then $x_u^k=x_v^k$.
\end{lemma}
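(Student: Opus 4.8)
The plan is to play the eigenvector equation (\ref{eigen}) off against itself at the two ``twin'' vertices $u$ and $v$. The first observation is that $E(u)=E(v)$ has two consequences: it forces $d(u)=d(v)$, since both degrees count the cardinality of the same edge set, and — crucially — it guarantees that \emph{every} edge $e$ in this common set contains both $u$ and $v$ (an edge through $u$ is an edge through $v$, and conversely). Hence for each such $e$ we may factor out the two coordinates at $u$ and $v$ separately, writing $e\setminus\{u\}=\{v\}\cup(e\setminus\{u,v\})$ and $e\setminus\{v\}=\{u\}\cup(e\setminus\{u,v\})$, so that $x^{e\setminus\{u\}}=x_v\,x^{e\setminus\{u,v\}}$ and $x^{e\setminus\{v\}}=x_u\,x^{e\setminus\{u,v\}}$. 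Both right-hand sides now share the common factor $x^{e\setminus\{u,v\}}$.

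Next I would set $d:=d(u)=d(v)$ and introduce the common sum $S:=\sum_{e\in E(u)}x^{e\setminus\{u,v\}}$. Applying (\ref{eigen}) at $v$ and then at $u$, and using the factorizations above, yields $(\la-d)\,x_v^{k-1}=x_u S$ and $(\la-d)\,x_u^{k-1}=x_v S$. Multiplying the first identity by $x_v$ and the second by $x_u$ produces $(\la-d)\,x_v^{k}=x_u x_v S=(\la-d)\,x_u^{k}$. Since by hypothesis $\la\ne d(u)=d$, the scalar $\la-d$ is nonzero and may be cancelled, giving $x_u^{k}=x_v^{k}$, as claimed.

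I do not expect any real obstacle here; the argument is a two-line manipulation of (\ref{eigen}). The one place that merits a word of care is the bookkeeping in the factorization step: it is legitimate precisely because $u$ and $v$ lie in every edge of the common set $E(u)=E(v)$, which is exactly the hypothesis. (Degenerate cases cause no trouble: if $u=v$ the conclusion is trivial, and if $k=2$ the product $x^{e\setminus\{u,v\}}$ is an empty product equal to $1$, leaving the computation unchanged.)
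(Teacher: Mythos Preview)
Your argument is correct and is essentially the paper's proof. The paper writes it a touch more compactly: multiplying (\ref{eigen}) at $u$ by $x_u$ and at $v$ by $x_v$ gives directly $(\la-d(u))x_u^{k}=\sum_{e\in E(u)}x^{e}=\sum_{e\in E(v)}x^{e}=(\la-d(v))x_v^{k}$, so your auxiliary sum $S$ and the factorization $x^{e\setminus\{u\}}=x_v\,x^{e\setminus\{u,v\}}$ are just an unpacking of the identity $x_u x_v S=\sum_{e}x^{e}$.
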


\begin{proof}
Consider the eigenvector equation of $x$ at $u$ and $v$ respectively,
$$(\lambda-d(u))x_u^k=\sum_{e\in E(u)}x^e, \;(\lambda-d(v))x_v^k=\sum_{e\in E(v)}x^e.$$
As $E[u]=E[v]$, $d(u)=d(v)$ and $\sum_{e\in E(u)}x^e=\sum_{e\in E(v)}x^e$.
The result follows.
\end{proof}


\begin{lemma}[\cite{KF2}]\label{mindegree}
Let $G$ be a $k$-uniform hypergraph with the minimum degree $\delta(G) >0$, where $k$ is even.
 Then $\lambda_{\min}(G)<\delta(G)$.
\end{lemma}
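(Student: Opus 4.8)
The plan is to exhibit, for each sufficiently small $\epsilon>0$, a unit test vector $x$ (in the $\|\cdot\|_k$ norm) with $\Q(G)x^k<\delta(G)$, and then invoke the variational formula (\ref{form2}) of Lemma \ref{semi}(2), which gives $\lambda_{\min}(G)\le \Q(G)x^k$. Write $\delta=\delta(G)$, fix a vertex $u_0$ with $d(u_0)=\delta$, and fix one edge $e_0=\{u_0,u_1,\dots,u_{k-1}\}\in E(G)$ containing it (such an edge exists since $\delta>0$). Define $x$ by $x_{u_0}=-(1-(k-1)\epsilon^k)^{1/k}$, $x_{u_i}=\epsilon$ for $i=1,\dots,k-1$, and $x_w=0$ for every other vertex $w$; then $\|x\|_k=1$. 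The single negative coordinate is the crucial device: since $k$ is even, every term $x_e^k=\sum_{u\in e}x_u^k$ is nonnegative, while $x^{e_0}=x_{u_0}\prod_{i=1}^{k-1}x_{u_i}<0$.

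Next I would evaluate $\Q(G)x^k=\sum_{e\in E(G)}(x_e^k+kx^e)$ for this $x$. Using $\sum_{e}x_e^k=\sum_{v}d(v)x_v^k$ and the fact that only $u_0,\dots,u_{k-1}$ carry nonzero values,
\[
\sum_{e\in E(G)}x_e^k=d(u_0)x_{u_0}^k+\epsilon^k\sum_{i=1}^{k-1}d(u_i)=\delta+\epsilon^k\Big(\sum_{i=1}^{k-1}d(u_i)-(k-1)\delta\Big),
\]
and, since every edge other than $e_0$ contains a vertex on which $x$ vanishes, $\sum_{e\in E(G)}x^e=x^{e_0}=-\big(1-(k-1)\epsilon^k\big)^{1/k}\epsilon^{k-1}$. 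Hence
\[
\Q(G)x^k=\delta+\epsilon^k\,C-k\big(1-(k-1)\epsilon^k\big)^{1/k}\epsilon^{k-1},\qquad C:=\sum_{i=1}^{k-1}d(u_i)-(k-1)\delta\ge 0 .
\]

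Finally I would compare orders in $\epsilon$. For $\epsilon$ small enough that $(1-(k-1)\epsilon^k)^{1/k}\ge\tfrac12$ and $\epsilon<\tfrac{k}{2(C+1)}$, we obtain $\Q(G)x^k\le \delta+\epsilon^{k-1}\big(C\epsilon-\tfrac{k}{2}\big)<\delta$, because $k-1\ge 1$ makes the negative term of order $\epsilon^{k-1}$ dominate the positive term of order $\epsilon^{k}$. Together with $\lambda_{\min}(G)\le \Q(G)x^k$ this yields $\lambda_{\min}(G)<\delta(G)$. There is no serious obstacle here; the only points needing care are the bookkeeping that collapses $\sum_{e}x^e$ to the single summand $x^{e_0}$, and the observation that the $O(\epsilon^{k-1})$ gain produced by the one negative coordinate beats the $O(\epsilon^{k})$ loss coming from the possibly large degrees $d(u_1),\dots,d(u_{k-1})$, so that concentrating the mass of $x$ near the minimum-degree vertex is essential.
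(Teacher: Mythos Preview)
The paper does not supply its own proof of this lemma; it is quoted from \cite{KF2} without argument, so there is no in-paper proof to compare against.

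Your argument is correct. The two points that could have gone wrong both go through: first, since edges are $k$-sets, any edge $e\neq e_0$ necessarily contains a vertex outside $\{u_0,\dots,u_{k-1}\}$, so indeed $\sum_{e}x^e$ collapses to the single term $x^{e_0}$; second, the variational characterization (\ref{form2}) is available because $k$ is even. The asymptotic comparison is clean: the negative contribution $-k(1-(k-1)\epsilon^k)^{1/k}\epsilon^{k-1}$ is of order $\epsilon^{k-1}$, whereas the nonnegative correction $C\epsilon^{k}$ is of higher order, so for small $\epsilon$ one gets $\Q(G)x^k<\delta$ and hence $\lambda_{\min}(G)<\delta(G)$.
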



\section{Properties of the first  eigenvectors}

Let $G_1$, $G_2$ be two vertex-disjoint hypergraphs, and let $v_1\in V(G_1), v_2\in V(G_2)$.
The \emph{coalescence} of $G_1$, $G_2$ with respect to $v_1, v_2$,
   denoted by $G_1(v_1)\diamond G_2(v_2)$, is obtained from $G_1$, $G_2$ by identifying $v_1$ with $v_2$ and forming a new vertex $u$.
The graph $G_1(v_1)\diamond G_2(v_2)$ is also written as $G_1(u)\diamond G_2(u)$.
If a connected graph $G$ can be expressed
in the form $G=G_1(u)\diamond G_2(u)$, where $G_1$, $G_2$ are both nontrivial and connected, then $G_1$ is called a
\emph{branch} of $G$ with \emph{root} $u$. Clearly $G_2$ is also a branch of $G$ with root $u$ in the above definition.

We will give some properties of the first  eigenvectors of a connected $k$-uniform $G$ which contains an odd-bipartite branch.
We stress that $k$ is \emph{even} in this and the following sections.

\begin{lemma}\label{prop1}
Let $G=G_0(u)\diamond H(u)$ be a connected $k$-uniform hypergraph, where $H$ is odd-bipartite.
Let $x$ be a first  eigenvector of $G$. Then the following results hold.

\begin{enumerate}
  \item  $x^e \le 0$ for each $e\in E(H)$.
  \item  If $x_u=0$, then $\sum_{e\in E_{G_0}(u)}x^{e\backslash\{u\}}=0$, and $x^{e\backslash\{u\}}=0$ for each $e \in E_H(u)$.
  \item  There exists a first  eigenvector of $G$
  　　　　　　　　　　such that it is nonnegative on one part and nonpositive on the other part for any odd-bipartition of $H$.
\end{enumerate}
\end{lemma}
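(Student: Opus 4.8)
The plan is to base everything on a single symmetrisation. Given a first eigenvector $x$ and an odd-bipartition $\{W_1,W_2\}$ of $H$, labelled so that $u\in W_1$, first replace $x$ by $-x$ if necessary so that $x_u\ge 0$ (legitimate since $k$ is even, so $-x$ is again a first eigenvector and $(-x)^e=x^e$ for $e\in E(H)$); then define $z_v=|x_v|$ for $v\in W_1$, $z_v=-|x_v|$ for $v\in W_2$, and $z_v=x_v$ for $v\in V(G_0)$, which is consistent at $u$ because $x_u\ge 0$. The elementary facts I will use repeatedly are: $\|z\|_k=\|x\|_k$; on every $e\in E(G_0)$, $z|_e=x|_e$; and on every $e\in E(H)$, $z_e^k=x_e^k$ while $z^e=(-1)^{|e\cap W_2|}|x^e|=-|x^e|$, the last equality because $|e\cap W_1|$ is odd and $k$ is even, so $|e\cap W_2|$ is odd.

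Parts (1) and (3) then come out together. Plugging $z$ into (\ref{form}) gives $\Q(G)z^k=\Q(G)x^k-k\sum_{e\in E(H)}(x^e+|x^e|)=\lambda_{\min}(G)-k\sum_{e\in E(H)}(x^e+|x^e|)$. Since $\|z\|_k=1$, Lemma \ref{semi}(2) forces $\Q(G)z^k\ge\lambda_{\min}(G)$, hence $\sum_{e\in E(H)}(x^e+|x^e|)\le 0$; as each summand is nonnegative, each vanishes, i.e. $x^e\le 0$ for every $e\in E(H)$, which is (1). The same computation shows $\Q(G)z^k=\lambda_{\min}(G)$, so by Lemma \ref{semi}(2), $z$ is itself a first eigenvector, and by construction $z\ge 0$ on $W_1$ and $z\le 0$ on $W_2$; this gives (3).

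For (2), assume $x_u=0$. Write $a=\sum_{v\in V(G_0)}|x_v|^k$, $b=\sum_{v\in V(H)}|x_v|^k$ (so $a+b=1$, since $x_u=0$) and $\alpha=\sum_{E(G_0)}(x_e^k+kx^e)$, $\beta=\sum_{E(H)}(x_e^k+kx^e)$ (so $\alpha+\beta=\lambda_{\min}(G)$). The vector $x'$ equal to $x$ on $V(G_0)$ and $0$ on $V(G)\setminus V(G_0)$ satisfies $\Q(G)x'^k=\alpha$ and $\|x'\|_k^k=a$ (all edges of $H$ contribute $0$, since $x'$ vanishes on $V(H)\setminus\{u\}$ and $x'_u=0$); the complementary vector $x''$ gives $\beta$ and $b$. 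The variational bound gives $\alpha\ge\lambda_{\min}(G)a$ and $\beta\ge\lambda_{\min}(G)b$, and since these sum to the equality $\alpha+\beta=\lambda_{\min}(G)(a+b)$, both are equalities; thus $x'$ (normalised) is an optimal solution of (\ref{form2}), hence a first eigenvector, and its eigenvector equation (\ref{eigen}) at $u$, using $x'_u=0$, reduces to $0=\sum_{e\in E_{G_0}(u)}x^{e\setminus\{u\}}$ (the $E_H(u)$-terms drop out), which is the first half of (2). If $a=0$ this assertion is immediate, and if $b=0$ the second half of (2) is immediate while the first half follows from (\ref{eigen}) at $u$, so these degenerate cases are handled separately. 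For the second half, bring in the symmetrised eigenvector $z$ attached to some odd-bipartition $\{W_1,W_2\}$ with $u\in W_1$: now $z_u=0$ and $z=x$ on $V(G_0)$, so (\ref{eigen}) at $u$ together with the first half gives $\sum_{e\in E_H(u)}z^{e\setminus\{u\}}=0$. For $e=\{u\}\cup S\in E_H(u)$ one has $|S\cap W_2|=|e\cap W_2|$ odd, so $z^{e\setminus\{u\}}=\bigl(\prod_{v\in S\cap W_1}z_v\bigr)\bigl(\prod_{v\in S\cap W_2}z_v\bigr)$ is a nonnegative number times a nonpositive one, hence $z^{e\setminus\{u\}}\le 0$; a sum of nonpositive reals that vanishes has every term zero, so $z^{e\setminus\{u\}}=0$ for each such $e$, and since $|z_v|=|x_v|$ throughout, $x^{e\setminus\{u\}}=0$ as well.

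The genuine obstacle is the second half of (2). With $x_u=0$ every monomial $x^S$ with $u\in S$ vanishes, so every identity obtained from eigenvector equations of $x$ at $u$ or on $H$ (and from the various rescalings of $x$, which all turn out to be first eigenvectors) is trivially satisfied and yields only that the sum $\sum_{e\in E_H(u)}x^{e\setminus\{u\}}$ vanishes, never the individual terms. The point of passing to $z$ is precisely that its prescribed signs make each $z^{e\setminus\{u\}}$ ($e\in E_H(u)$) nonpositive, so that "sum $=0$" upgrades to "every term $=0$", and the transfer back to $x$ is free because $z$ and $x$ share the same zero set. The only other bookkeeping to watch is the normalisation $u\in W_1$ and the two degenerate cases $a=0$, $b=0$ noted above.
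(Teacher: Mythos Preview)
Your argument is correct. Parts (1) and (3) are exactly the paper's proof: the same absolute-value symmetrization (the paper's $\tilde{x}$ is your $z$), the same variational comparison, the same conclusions.

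For part (2) you take a genuinely different route to the first half. The paper introduces a \emph{second} symmetrization $\bar{x}$, obtained by swapping the sign pattern on the two parts of the odd-bipartition ($-|x_v|$ on $U$, $|x_v|$ on $W$); since $x_u=0$ this is again consistent on $G_0$, and the argument for (1) shows $\bar{x}$ is a first eigenvector too. Writing the eigenvector equation at $u$ for both $\tilde{x}$ and $\bar{x}$, the $G_0$-contributions coincide while the $H$-contributions differ by a global sign, so adding and subtracting yields both $\sum_{E_{G_0}(u)}x^{e\setminus\{u\}}=0$ and $\sum_{E_H(u)}\tilde{x}^{e\setminus\{u\}}=0$ in one stroke. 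Your variational splitting $x=x'+x''$, with $\Q(G)x'^k+\Q(G)x''^k=\lambda_{\min}(G)(\|x'\|_k^k+\|x''\|_k^k)$ forcing equality in both Rayleigh inequalities, is a more conceptual way to see that the zero-extended restriction $x'$ is itself a first eigenvector; the price is the separate treatment of $a=0$ and $b=0$, which the paper's sign-swap avoids. The second half of (2)---passing to $z$, using the just-proved first half to isolate $\sum_{E_H(u)}z^{e\setminus\{u\}}=0$, and invoking the sign structure to get termwise vanishing---is essentially the same in both proofs.
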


\begin{proof}
Let $\{U,W\}$ be an odd-bipartition of $H$, where $u\in U$.
Without loss of generality, we assume that $\|x\|_k=1$ and $x_u\ge 0$.
Let $\tilde{x}$ be such that
\[
\tilde{x}_v=
\begin{cases}
x_v,&\text{if } v\in V(G_0)\backslash\{u\};\\
|x_v|,&\text{if } v\in U;\\
-|x_v|,&\text{if } v\in W.
\end{cases}
\]
Note that $\|\tilde{x}\|_k=\|x\|_k=1$, and for each $e \in E(H)$,
\begin{itemize}
    \item [{\rm(a)}] $\tilde{x}_e^{k}=x_e^{k}$.
  \item [{\rm(b)}] $\tilde{x}^e \le x^e$ with equality if and only if $x^e \le 0$.
\end{itemize}

We prove the assertion (1) by a contradiction.
Suppose that there exists an edge $e \in E(H)$ such that $x^{e}>0$.
Then $\tilde{x}^{e}<x^{e}$.
By (a), (b), and Eq. (\ref{form2}), we have
\[
\lambda_{\min}(G)\le  \Q(G)\tilde{x}^k < \Q(G)x^k=\lambda_{\min}(G),
\]
a contradiction.
So $x^e \le  0$ for each $e\in E(H)$, and $\tilde{x}$ is also a first  eigenvector as $\Q(G)\tilde{x}^k = \Q(G)x^k$.
The assertions (1) and (3) follow.

For the assertion (2), let $\bar{x}$ be such that
\[
\overline{x}_v=
\begin{cases}
x_v,&\text{if } v\in V(G_0)\backslash\{u\};\\
-|x_v|,&\text{if } v\in U;\\
|x_v|,&\text{if } v\in W.
\end{cases}
\]
By a similar discussion, $\bar{x}$ is also a first  eigenvector of $G$.
Note that $x_u=0$ and consider the eigenvector equation Eq. (\ref{eigen}) of $\tilde{x}$ and $\bar{x}$ at $u$, respectively.
$$(\lambda_{\min}(G)-d(u))\tilde{x}_u^{k-1}=0=\sum_{e\in E_{G_0}(u)}\tilde{x}^{e\backslash \{u\}}+\sum_{e\in E_H(u)}\tilde{x}^{e\backslash \{u\}},$$
\begin{eqnarray*}
  (\lambda_{\min}(G)-d(u))\bar{x}_u^{k-1}=0 &=& \sum_{e\in E_{G_0}(u)}\bar{x}^{e\backslash \{u\}}+\sum_{e\in E_H(u)}\bar{x}^{e\backslash \{u\}} \\&=& \sum_{e\in E_{G_0}(u)}\tilde{x}^{e\backslash \{u\}}-\sum_{e\in E_H(u)}\tilde{x}^{e\backslash \{u\}}.
\end{eqnarray*}
Thus $\sum_{e\in E_{G_0}(u)}x^{e\backslash\{u\}}=\sum_{e\in E_{G_0}(u)}\tilde{x}^{e\backslash \{u\}}=0$
   and $\sum_{e\in E_H(u)}\tilde{x}^{e\backslash \{u\}}=0$.
As $\tilde{x}^{e\backslash \{u\}} \le 0$ for each edge $e\in E_H(u)$, we have $\tilde{x}^{e\backslash\{u\}}=0$ for each $e \in E_H(u)$.
The assertion (2) follows by the definition of $\tilde{x}$.
\end{proof}

\begin{lemma}\label{prop2}
Let $G=G_0(u)\diamond H(u)$ be a connected non-odd-bipartite $k$-uniform hypergraph, where $H$ is odd-bipartite.
Then $$\lambda_{\min}(G_0)\ge \lambda_{\min}(G),$$
with equality if and only if for any first  eigenvector $y$ of $G_0$, $y_u=0$ and $\tilde{y}$ is a first  eigenvector of $G$,
where $\tilde{y}$ is defined by
\[
\tilde{y}_v=
\begin{cases}
y_v,&\text{if } v\in V(G_0);\\
0,&\text{otherwise.}
\end{cases}
\]
\end{lemma}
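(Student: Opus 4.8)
The plan is to transplant a first eigenvector of $G_0$ onto $G$ without raising the objective, using an essentially free null vector supplied by the odd-bipartiteness of $H$, and then to read off the equality case from when this transplantation preserves the $k$-norm. The key preliminary is the \emph{additivity identity}: for $z\in\mathbb{R}^{V(G)}$, writing $z|_{G_0},z|_{H}$ for the restrictions to $V(G_0),V(H)$, the fact that $V(G_0)\cap V(H)=\{u\}$ forces every edge of $G$ to lie inside $V(G_0)$ or inside $V(H)$, so $E(G)=E(G_0)\sqcup E(H)$ and Eq. (\ref{form}) gives
\[
\Q(G)z^k=\Q(G_0)(z|_{G_0})^k+\Q(H)(z|_{H})^k.
\]
I also use that $\lambda_{\min}(G_0)\ge0$ (since $\Q(G_0)$ is positive semidefinite), together with the following elementary fact about $H$: fixing an odd-bipartition $\{U,W\}$ of $H$ with $u\in U$ and any scalar $c$, the vector $w\in\mathbb{R}^{V(H)}$ equal to $c$ on $U$ and $-c$ on $W$ satisfies $w_e^k=kc^k$ and $w^e=(-1)^{|e\cap W|}c^k=-c^k$ for every $e\in E(H)$ (because $|e\cap W|$ is odd), whence $\Q(H)w^k=\sum_{e\in E(H)}(w_e^k+kw^e)=0$.

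To prove $\lambda_{\min}(G_0)\ge\lambda_{\min}(G)$, I would take a first eigenvector $y$ of $G_0$ with $\|y\|_k=1$; since $k$ is even, $-y$ is also one, so I may assume $y_u\ge0$. Take $w$ as above with $c=y_u$ and let $z$ agree with $y$ on $V(G_0)$ and with $w$ on $V(H)$; this is consistent at $u$, where both values equal $y_u$. By additivity, $\Q(G)z^k=\Q(G_0)y^k+\Q(H)w^k=\lambda_{\min}(G_0)$, while $\|z\|_k^k=\|y\|_k^k+(\|w\|_k^k-|w_u|^k)=1+(\nu(H)-1)\,y_u^k\ge1$. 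Hence Eq. (\ref{form2}) yields $\lambda_{\min}(G)\le\Q(G)(z/\|z\|_k)^k=\lambda_{\min}(G_0)/\|z\|_k^k\le\lambda_{\min}(G_0)$.

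For the equality characterization I would first observe that $G_0$ is non-odd-bipartite: otherwise, gluing an odd-bipartition of $G_0$ and one of $H$ along $u$ produces an odd-bipartition of $G$, contradicting the hypothesis; so $\lambda_{\min}(G_0)>0$ by the theorem of Shao et al.\ \cite{SSW}. Suppose $\lambda_{\min}(G)=\lambda_{\min}(G_0)$ and let $y$ be any first eigenvector of $G_0$, normalized so that $\|y\|_k=1$ and, after possibly replacing $y$ by $-y$, $y_u\ge0$. If $y_u>0$ then, since $\nu(H)\ge k\ge2$, the displayed chain is strict and $\lambda_{\min}(G)<\lambda_{\min}(G_0)$, a contradiction; hence $y_u=0$. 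Then $z=\tilde{y}$ and $\Q(G)\tilde{y}^k=\lambda_{\min}(G_0)=\lambda_{\min}(G)$ with $\|\tilde{y}\|_k=1$, so $\tilde{y}$ is a first eigenvector of $G$ by Lemma \ref{semi}(2); replacing $y$ by $-y$ changes nothing, since the zero-extension of $-y$ is $-\tilde{y}$ and $-\tilde{y}$ is a first eigenvector of $G$ iff $\tilde{y}$ is, so this holds for every first eigenvector $y$ of $G_0$. Conversely, if every first eigenvector $y$ of $G_0$ has $y_u=0$ with $\tilde{y}$ a first eigenvector of $G$, then for one such $y$ with $\|y\|_k=1$ we have $\tilde{y}|_{H}=0$, hence $\lambda_{\min}(G)=\Q(G)\tilde{y}^k=\Q(G_0)y^k=\lambda_{\min}(G_0)$.

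The only step requiring a genuine idea is the free null vector $w$ of $H$ in the second paragraph: it is precisely what allows the optimal vector of $G_0$ to pass through the coalescence at the root $u$ without increasing $\Q(G)z^k$. Everything else is routine --- the disjointness $E(G)=E(G_0)\sqcup E(H)$, the careful accounting for the shared vertex $u$ in the norm identity, and the uniform treatment of the ``for any first eigenvector'' quantifier with respect to the sign normalization --- so I do not expect a substantive obstacle beyond keeping this bookkeeping straight.
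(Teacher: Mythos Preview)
Your proof is correct and follows essentially the same approach as the paper: you build the same test vector on $G$ by extending a first eigenvector $y$ of $G_0$ with the constant $\pm y_u$ pattern on an odd-bipartition of $H$, obtain $\Q(G)\bar y^k=\lambda_{\min}(G_0)$ and $\|\bar y\|_k^k=1+(\nu(H)-1)y_u^k$, and read off both the inequality and the equality case from the chain $\lambda_{\min}(G)\le \lambda_{\min}(G_0)/\|\bar y\|_k^k\le \lambda_{\min}(G_0)$. Your write-up is a bit more explicit in justifying that $G_0$ is non-odd-bipartite (hence $\lambda_{\min}(G_0)>0$) and in handling the ``for any first eigenvector'' quantifier, but the substance is identical to the paper's argument.
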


\begin{proof}
Suppose that $y$ is a first  eigenvector of $G_0$, $\|y\|_k=1$, and $y_u\ge 0$.
Let $\{U,W\}$ be an odd-bipartition of $H$, where $u \in U$.
Define $\bar{y}$ by
\[
\bar{y}_v=
\begin{cases}
y_v,&\text{if } v\in V(G_0);\\
y_{u},&\text{if } v\in U\backslash\{u\};\\
-y_{u},&\text{if } v\in W.
\end{cases}
\]
Then $\|\bar{y}\|_k=1+(\nu(H)-1)y_u^k$, and
\begin{eqnarray*}
  \Q(G)\bar{y}^k &=& \sum_{e\in E(G)}(\bar{y}_e^{k}+k\bar{y}^e) \\
                        &=& \sum_{e\in E(G_0)}(\bar{y}_e^{k}+k\bar{y}^e)+\sum_{e\in E(H)}(\bar{y}_e^{k}+k\bar{y}^e)\\
            &=& \Q(G_0)y^k +\sum_{e\in E(H)}(ky_u^k-ky_u^k)\\
            &=& \lambda_{\min}(G_0).
\end{eqnarray*}
By Eq. (\ref{form2}), we have
$$\lambda_{\min}(G)\le  \frac{\Q\bar{y}^k} {\|\bar{y}\|_k^k}=\frac{\lambda_{\min}(G_0)} {1+(\nu(H)-1)y_u^k}\le \lambda_{\min}(G_0),$$
where the first equality holds if and only if $\bar{y}$ is also a first  eigenvector of $G$, and the second equality holds if and only if $y_u=0$
(Note that $\lambda_{\min}(G_0)>0$ as $G_0$ is connected and non-odd-bipartite).
The result now follows.
\end{proof}

\begin{coro}\label{prop3}
Let $G=G_0(u)\diamond H(u)$ be a connected non-odd-bipartite $k$-uniform hypergraph, where $H$ is odd-bipartite.
\begin{enumerate}
  \item If $y$ is a first  eigenvector of $G_0$ with $y_u\neq 0$, then
$$\lambda_{\min}(G_0)>\lambda_{\min}(G).$$
  \item If $x$ is a first  eigenvector of $G$ such that $x_u=0$ and $x|_{G_0} \ne 0$, then
$$\lambda_{\min}(G_0)=\lambda_{\min}(G).$$
\end{enumerate}
\end{coro}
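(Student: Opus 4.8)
The plan is to obtain both parts directly from Lemma \ref{prop2}, with part (2) also invoking Lemma \ref{prop1}.

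Part (1) is essentially immediate: Lemma \ref{prop2} gives $\lambda_{\min}(G_0)\ge\lambda_{\min}(G)$ and characterizes equality by the condition that \emph{every} first eigenvector $y$ of $G_0$ satisfies $y_u=0$. Hence if there is a first eigenvector $y$ of $G_0$ with $y_u\ne 0$, equality cannot hold and the inequality is strict. Nothing further is needed.

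For part (2), the idea is to show that the restriction $x|_{G_0}$ is an eigenvector of $\Q(G_0)$ for the eigenvalue $\lambda_{\min}(G)$. Since $x|_{G_0}\ne 0$ is assumed, this exhibits $\lambda_{\min}(G)$ as an H-eigenvalue of $\Q(G_0)$, whence $\lambda_{\min}(G_0)\le\lambda_{\min}(G)$; combined with Lemma \ref{prop2} this forces equality. To check the eigenvector equation (\ref{eigen}) for $\Q(G_0)$ at a vertex $v\in V(G_0)$: when $v\ne u$ one has $E_{G_0}(v)=E_G(v)$ and $d_{G_0}(v)=d_G(v)$, and each monomial $x^{e\backslash\{v\}}$ with $e\in E_{G_0}(v)$ involves only vertices of $G_0$, so the equation for $x$ in $G$ at $v$ restricts verbatim to the required equation in $G_0$; when $v=u$, the hypothesis $x_u=0$ kills the left-hand side $(\lambda_{\min}(G)-d_{G_0}(u))x_u^{k-1}$, while Lemma \ref{prop1}(2) gives $\sum_{e\in E_{G_0}(u)}x^{e\backslash\{u\}}=0$, so the equation holds at $u$ as well.

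The only place requiring care is the coalescence vertex $u$, where the degrees and incident edge sets differ between $G$ and $G_0$; this is precisely where the hypothesis $x_u=0$ (through Lemma \ref{prop1}(2)) enters, and it is indispensable. Everything else is routine bookkeeping.
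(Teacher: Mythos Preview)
Your proposal is correct and follows essentially the same approach as the paper: part (1) is the contrapositive of the equality criterion in Lemma \ref{prop2}, and part (2) shows that $x|_{G_0}$ is an eigenvector of $\Q(G_0)$ for the eigenvalue $\lambda_{\min}(G)$ by verifying Eq.~(\ref{eigen}) on $V(G_0)$, using Lemma \ref{prop1}(2) at the coalescence vertex $u$, and then concludes via Lemma \ref{prop2}. Your write-up is in fact a bit more explicit than the paper's about why the eigenvector equation holds at $u$ and at the other vertices of $G_0$.
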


\begin{proof}
By Lemma \ref{prop2}, we can get the assertion (1) immediately.
Let $x$ be a first  eigenvector of $G$ as in (2).
By Lemma \ref{prop1}(2), $\sum_{e\in E_{G_0}(u)}x^{e\backslash\{u\}}=0$.
Considering the eigenvector equation (\ref{eigen}) of $x$ at each vertex of $V(G_0)$, we have
$$\Q(G_0)(x|_{G_0})^{k-1}=\lambda_{\min}(G)(x|_{G_0})^{[k-1]}.$$
So $x|_{G_0}$ is an eigenvector of $\Q(G_0)$ associated with the eigenvalue $\lambda_{\min}(G)$.
The result follows by Lemma \ref{prop2}.
\end{proof}

\begin{lemma}\label{bh}
Let $G=G_0(u)\diamond H(u)$ be a connected non-odd-bipartite $k$-uniform hypergraph, where $H$ is odd-bipartite.
If $x$ is a first  eigenvector of $G$, then
\begin{equation}\label{beta}
\beta_H(x):=d_H(u)x_u^k+ \sum_{e\in E_H(u)} x^e \le 0.
\end{equation}
Furthermore, if $\beta_H(x)=0$ and $x|_{G_0} \ne 0$, then $x_u =0$ and $\lambda_{\min}(G_0)=\lambda_{\min}(G)$;
or equivalently if $x_u \ne 0$, then $\beta_H(x)<0$.
\end{lemma}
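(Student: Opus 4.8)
The plan is to compare $\Q(G)x^k$ against a modified vector obtained by rescaling the restriction of $x$ to the odd-bipartite branch $H$, exploiting the fact that $\beta_H(x)$ measures exactly the ``$H$-side'' contribution of the eigenvector equation at $u$. First I would set $\|x\|_k = 1$ and, by Lemma~\ref{prop1}(3), assume $x$ is nonnegative on the part of $H$ containing $u$ and nonpositive on the other part; in particular $x^e \le 0$ for every $e \in E(H)$. Writing out the eigenvector equation Eq.~(\ref{eigen}) at $u$ and splitting $E(u) = E_{G_0}(u) \cup E_H(u)$ gives
\[
(\lambda_{\min}(G) - d_{G_0}(u))x_u^{k-1} = \sum_{e \in E_{G_0}(u)} x^{e\backslash\{u\}} + \left(d_H(u)x_u^{k-1} - (\text{something})\right),
\]
so the quantity $\beta_H(x)$ naturally appears when one isolates how much ``force'' the branch $H$ exerts at the root. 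This reformulation is the conceptual heart of the argument.

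For the inequality $\beta_H(x) \le 0$, I would argue by contradiction: if $\beta_H(x) > 0$, construct a test vector $\hat{x}$ that agrees with $x$ on $V(G_0)$ but scales $x|_H \setminus \{x_u\}$ by a factor $t > 1$ (keeping the value at $u$ fixed), or more cleanly, replace $x|_H$ by a scalar multiple of a first eigenvector of $H$ (whose least eigenvalue is $0$ since $H$ is odd-bipartite) supported away from $u$. The change in $\Q(G)x^k$ under such a perturbation, to first order, is governed by the sign of $\beta_H(x)$ together with the nonpositivity of the $x^e$ terms from part~(1) of Lemma~\ref{prop1}; a positive $\beta_H(x)$ would let us strictly decrease $\Q(G)\hat{x}^k / \|\hat{x}\|_k^k$ below $\lambda_{\min}(G)$, contradicting Eq.~(\ref{form2}). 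The delicate point is choosing the perturbation so that the first-order term is exactly a positive multiple of $\beta_H(x)$ and the second-order terms do not spoil the strict inequality — here using that $H$ has least eigenvalue zero so that the ``internal'' energy of $x|_H$ can be driven to zero is what makes this work.

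For the equality case, suppose $\beta_H(x) = 0$ and $x|_{G_0} \ne 0$. Then the eigenvector equation at $u$ collapses to
\[
(\lambda_{\min}(G) - d_{G_0}(u))x_u^{k-1} = \sum_{e \in E_{G_0}(u)} x^{e\backslash\{u\}},
\]
which is precisely the eigenvector equation for $\Q(G_0)$ at $u$ with eigenvalue $\lambda_{\min}(G)$; combined with Eq.~(\ref{eigen}) at the other vertices of $G_0$, this shows $x|_{G_0}$ is an eigenvector of $\Q(G_0)$ for $\lambda_{\min}(G)$, hence $\lambda_{\min}(G_0) \le \lambda_{\min}(G)$. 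By Lemma~\ref{prop2} the reverse inequality always holds, so $\lambda_{\min}(G_0) = \lambda_{\min}(G)$. It then remains to deduce $x_u = 0$: I would apply the equality condition in Lemma~\ref{prop2} (any first eigenvector $y$ of $G_0$ has $y_u = 0$) to $y = x|_{G_0}$, which is exactly such a first eigenvector. The final ``equivalently'' clause is then just the contrapositive. I expect the main obstacle to be the perturbation bookkeeping in the strict-inequality step: making sure the test vector can be chosen with the correct first-order behavior while controlling $\|\hat{x}\|_k$, which is where invoking $\lambda_{\min}(H) = 0$ and an explicit first eigenvector of $H$ (rather than an abstract scaling) is likely essential.
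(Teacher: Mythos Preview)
Your treatment of the equality case is essentially the paper's argument: once $\beta_H(x)=0$ and $x|_{G_0}\ne 0$, one checks that $x|_{G_0}$ is an eigenvector of $\Q(G_0)$ with eigenvalue $\lambda_{\min}(G)$ (when $x_u=0$ the equation at $u$ needs Lemma~\ref{prop1}(2)), whence $\lambda_{\min}(G_0)=\lambda_{\min}(G)$ by Lemma~\ref{prop2}, and then Corollary~\ref{prop3}(1) (or the equality condition in Lemma~\ref{prop2}) forces $x_u=0$.

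The gap is in your argument for the inequality $\beta_H(x)\le 0$. Because $x$ is a first eigenvector, it is a \emph{critical point} of $y\mapsto \Q(G)y^k/\|y\|_k^k$, so every smooth one-parameter deformation has zero first-order effect on the Rayleigh quotient; the ``first order governed by the sign of $\beta_H(x)$'' claim cannot hold. Concretely, if $\hat{x}(t)$ agrees with $x$ on $V(G_0)$ and equals $t\cdot x$ on $V(H)\setminus\{u\}$, then using the eigenvector equations at the vertices of $V(H)\setminus\{u\}$ one finds
\[
\Q(G)\hat{x}(t)^k-\lambda_{\min}(G)\,\|\hat{x}(t)\|_k^k
= \bigl[(1-k)t^k+kt^{k-1}-1\bigr]\sum_{e\in E_H(u)}x^e.
\]
The bracket is $\le 0$ for all $t\ge 0$ with a double zero at $t=1$, and $\sum_{e\in E_H(u)}x^e\le 0$ by Lemma~\ref{prop1}(1); hence the right side is $\ge 0$ for every $t$, confirming only that $x$ is a minimizer. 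Crucially, $\beta_H(x)=d_H(u)x_u^k+\sum_{e\in E_H(u)}x^e$ never appears in this expression, so the perturbation cannot detect its sign. Your alternative (replacing $x|_H$ by a zero-eigenvector of $H$) runs into the same obstruction once you match the value at $u$.

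The paper bypasses perturbation entirely. By Lemma~\ref{prop2} one has $\lambda_{\min}(G_0)\ge\lambda_{\min}(G)=:\lambda$, so $\Q(G_0)-\lambda\I$ is positive semidefinite by Lemma~\ref{semi}(1). The eigenvector equations for $x$ on $V(G_0)$ show that $((\Q(G_0)-\lambda\I)(x|_{G_0})^{k-1})_v=0$ for $v\ne u$ and equals $-\bigl(d_H(u)x_u^{k-1}+\sum_{e\in E_H(u)}x^{e\setminus\{u\}}\bigr)$ at $u$; pairing with $x|_{G_0}$ gives
\[
0\le (\Q(G_0)-\lambda\I)(x|_{G_0})^k=-\beta_H(x).
\]
This positive-semidefiniteness step on the $G_0$ side is the missing ingredient in your plan.
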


\begin{proof}
Let $\la:=\la_{\min}(G)$.
By Eq. (\ref{eigen}),
for each $v\in V(G_0)\backslash \{u\}$,
\begin{equation}\label{sem}
((\Q(G)-\lambda \I) x^{k-1})_v= ((\Q(G_0)-\lambda\I)(x|_{G_0})^{k-1})_v=0.
\end{equation}
For the vertex $u$,
\begin{eqnarray*}
\lambda x_u^{k-1} =(\Q(G)x^{k-1})_u &= & d_G(u)x_u^{k-1}+ \sum_{e\in E_G(u)}x^{e\backslash \{u\}},\\
& = & d_{G_0}(u)x_u^{k-1}+ \sum_{e\in E_{G_0}(u)}x^{e\backslash \{u\}}+d_H(u)x_u^{k-1}+ \sum_{e\in E_H(u)} x^{e\backslash \{u\}}\\
&= & (\Q(G_0)(x|_{G_0})^{k-1})_u +d_H(u)x_u^{k-1}+ \sum_{e\in E_H(u)} x^{e\backslash \{u\}}.
\end{eqnarray*}
So,
\begin{equation}\label{u}
((\Q(G_0)-\lambda\I)(x|_{G_0})^{k-1})_u=-\left(d_H(u)x_u^{k-1}+ \sum_{e\in E_H(u)} x^{e\backslash \{u\}}\right).
\end{equation}

By Lemma \ref{prop2} and Lemma \ref{semi}(1), $\Q(G_0)-\lambda\I$ is positive semidefinite.
Then $(\Q(G_0)-\lambda\I)y^k \ge 0$ for any real and nonzero $y$.
So, by Eq. (\ref{sem}) and Eq. (\ref{u}), we have
\begin{eqnarray*}
0 \le (\Q(G_0)-\lambda\I)(x|_{G_0})^k &=& (x|_{G_0})^\top ((\Q(G_0)-\lambda\I)(x|_{G_0})^{k-1} \\
                                        &=& -x_u \left(d_H(u)x_u^{k-1}+ \sum_{e\in E_H(u)} x^{e\backslash \{u\}}\right) \\
                                         &=& -\beta_H(x).
\end{eqnarray*}
So we have $\beta_H(x)\le 0$.

Suppose that $\beta_H=0$ and $x|_{G_0} \ne 0$.
If $x_u=0$, by Corollary \ref{prop3}(2), $\lambda_{\min}(G_0)=\lambda_{\min}(G)$.
If $x_u\neq 0$, then $d_H(u)x_u^{k-1}+ \sum_{e\in E_H(u)} x^{e\backslash \{u\}}=0$.
By Eq. (\ref{sem}) and Eq. (\ref{u}), $(\lambda_{\min}(G),x|_{G_0})$ is an eigenpair of $\Q(G_0)$, implying that $\lambda_{\min}(G)=\lambda_{\min}(G_0)$ by Lemma  \ref{prop2}.
However, $(x|_{G_0})_u =x_u \ne 0$, a contradiction to Corollary \ref{prop3}(1).
\end{proof}

\begin{lemma}\label{nonzero}
Let $G=G_0(r)\diamond T(r)$ be a connected non-odd-bipartite $k$-uniform hypergraph, where $T$ is a power hypertree.
If $x$ is a first  eigenvector of $G$ and $x_p\neq 0$ for some $p\in V(T)$,  then $x_q\neq0$
whenever $q$ is a vertex of $T$ such that $p$ lies on the unique path from $r$ to $q$.
\end{lemma}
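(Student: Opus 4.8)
The plan is to propagate the nonvanishing of $x$ one step at a time along the path, invoking Lemma~\ref{bh} at each step. Write the unique path in $T$ from $r$ to $q$ as $P:\ r=w_0\,f_1\,w_1\,f_2\,w_2\cdots f_s\,w_s=q$, and say $p=w_j$ for some $0\le j\le s$. If $j=s$ there is nothing to prove, so assume $j<s$. I would reduce the lemma to the following one-step claim: for each $i$ with $0\le i<s$, if $x_{w_i}\ne0$ then $x_{w_{i+1}}\ne0$. Applying this successively from $i=j$ up to $i=s-1$ then gives $x_q=x_{w_s}\ne0$.

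For the one-step claim, fix $i$ with $x_{w_i}\ne0$ and put $v:=w_i$, $w:=w_{i+1}$, $f:=f_{i+1}$. The key move is to peel off, at the vertex $v$, the branch $B$ of $G$ consisting of the edge $f$ together with, for every vertex $z\in f\setminus\{v\}$, the whole sub-hypertree of $T$ hanging at $z$ on the side of $f$ away from $v$. Then $B$ is a power hypertree — it is the $k$-th power of a subtree of the tree underlying $T$ — and inside $B$ the vertex $v$ meets only the edge $f$, so $d_B(v)=1$ and $E_B(v)=\{f\}$. Writing $G'$ for the rest of $G$ (namely $G_0$ together with the part of $T$ on the $r$-side of $f$), one has $G=G'(v)\diamond B(v)$ with $G'$ connected; and since power hypertrees are odd-bipartite while $G$ is not, $G'$ must be non-odd-bipartite. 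Hence Lemma~\ref{bh} applies to this decomposition and yields
\[
\beta_B(x)=d_B(v)\,x_v^{k}+\sum_{e\in E_B(v)}x^{e}=x_v^{k}+x^{f}\le0 .
\]
If $x_w$ were $0$, then $x^{f}=0$ (since $w\in f$), so $\beta_B(x)=x_v^{k}>0$ because $k$ is even and $x_v=x_{w_i}\ne0$ — contradicting $\beta_B(x)\le0$. Therefore $x_w\ne0$, which proves the one-step claim, and with it the lemma.

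The main obstacle I anticipate is not the spectral part — that is handled cleanly by Lemma~\ref{bh} — but verifying the two structural facts used above: that the ``lower'' sub-hypertree $B$ of a power hypertree is itself a power hypertree (hence odd-bipartite), and that a coalescence of two odd-bipartite hypergraphs at a common vertex is odd-bipartite (needed to conclude that $G'$ is non-odd-bipartite, so that Lemma~\ref{bh} is applicable). Both reduce to bookkeeping with odd-bipartitions, but they should be stated carefully; I would also quickly dispose of the degenerate cases ($T$ trivial, or $p=q$) at the outset, and note that the path from $r$ to $q$ indeed lies entirely in $T$ so that ``the unique path'' makes sense.
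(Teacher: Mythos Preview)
Your proposal is correct and follows essentially the same route as the paper: peel off the sub-hypertree branch at the current vertex and apply Lemma~\ref{bh} to get $x_v^{k}+x^{f}\le 0$, which is contradicted if any vertex of $f$ (in particular the next one along the path) carries a zero entry. The paper's proof inserts the degree-$1$ internal vertex of the edge as an intermediate stop, handling that second step with the eigenvector equation and Lemma~\ref{mindegree}; your single-step use of Lemma~\ref{bh} covers both cases at once and is slightly more streamlined.
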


\begin{proof}
It suffices to consider three vertices $u, v, w$ in a common edge $e\in E(T)$, where $d(u)\ge 2$, $d(v)=1$, $d(w)\ge 2$, and $u$ lies on the path from $r$ to $w$.
We will show $x_u \ne 0 \Rightarrow x_v \ne 0 \Rightarrow x_w \ne 0$.
Write $G=\bar{G}_0(u) \diamond \bar{T}(u)$,
where $\bar{G}_0$ contains $G_0$ as a sub-hypergraph, and $\bar{T}$ is a sub-hypergraph of $T$ such that $e$ is the only edge of $\bar{T}$ containing $u$.
Suppose that $x_u\neq 0$.
If $x_v=0$, by Lemma \ref{bh}, $$\beta_{\bar{T}}(x)=x_u^k+x^e=x_u^k \le 0,$$ a contradiction.
So $x_v \ne 0$.
If $x_w=0$, then by Eq. (\ref{eigen}), $x_v=0$ as $\la_{\min}(G)< \delta(G)=1$ by Lemma \ref{mindegree}, also a contradiction.
So $x_w \ne 0$.
\end{proof}

\begin{lemma}\label{increase}
Let $G=G_0(r)\diamond T(r)$ be a connected non-odd-bipartite $k$-uniform hypergraph, where $T$ is a power hypertree.
If $x$ is a first  eigenvector of $G$ and $x_r\neq 0$, then $|x_u|< |x_w|$ whenever $u,w$ are two vertices of $T$ such that
$u$ lies on the unique path from $r$ to $w$, $d(u)\ge 2$ and $d(w)\ge 2$.
\end{lemma}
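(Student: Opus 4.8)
The plan is to reduce to the case that $u$ and $w$ lie in a common edge of $T$, and then to squeeze both $|x_u|$ and $|x_w|$ against the common value of $|x|$ on the pendent vertices of that edge. Write $\la:=\la_{\min}(G)$. Since $G$ is connected and non-odd-bipartite, $\la>0$; and since $T$ is a nontrivial power hypertree, $G$ has a vertex of degree one, so $\delta(G)=1$ and Lemma \ref{mindegree} gives $\la<1$. Applying Lemma \ref{nonzero} with $p=r$ (note $r$ lies on the path from $r$ to every vertex of $T$) shows $x_z\neq0$ for every $z\in V(T)$; in particular $w\neq r$, hence $d_G(w)=d_T(w)$.

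I would carry out the reduction first. Following the path from $r$ to $w$, let $u=a_0,a_1,\ldots,a_m=w$ be, in order, the vertices of degree at least $2$ lying on the portion of that path from $u$ to $w$. From the construction of the $k$-th power of a tree, each consecutive pair $a_j,a_{j+1}$ lies in a common edge of $T$ whose other $k-2$ vertices are pendent, every $a_j$ has degree at least $2$ in $G$ (an intermediate $a_j$ corresponds to an interior vertex of a path in the underlying tree of $T$), and $a_j$ lies on the path from $r$ to $a_{j+1}$. So it suffices to prove the assertion when $u$ and $w$ lie in a common edge $e$ of $T$ with $d(u),d(w)\ge2$ and $u$ on the path from $r$ to $w$; chaining the resulting strict inequalities gives $|x_u|=|x_{a_0}|<\ldots<|x_{a_m}|=|x_w|$.

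Assume then that $e=\{u,w,v_1,\ldots,v_{k-2}\}$ with the $v_i$ pendent (so $k-2\ge2$). By Lemma \ref{xuv}, $|x_{v_i}|=t$ for all $i$ and some $t>0$, and multiplying Eq. (\ref{eigen}) at $v_1$ by $x_{v_1}$ gives $x^e=(\la-1)t^k$. To bound $|x_u|$ I would write $G=\bar{G}_0(u)\diamond\bar{T}(u)$, where $\bar{T}$ is the sub-hypergraph of $T$ rooted at $u$ in which $e$ is the only edge containing $u$; then $\bar{T}$ is a power hypertree, hence odd-bipartite, $\bar{G}_0$ is a nontrivial connected branch, and since $x_u\neq0$, Lemma \ref{bh} gives $\beta_{\bar{T}}(x)=x_u^k+x^e<0$, so $x_u^k < -x^e = (1-\la)t^k < t^k$, i.e. $|x_u|<t$. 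To bound $|x_w|$ I would write $G=\hat{G}_0(w)\diamond T_w(w)$, where $T_w$ is the branch of $T$ made up of $w$ and all vertices beyond $w$; this is a nontrivial (since $d_T(w)\ge2$) connected power hypertree, $\hat{G}_0$ is a nontrivial connected branch, and since $x_w\neq0$, Lemma \ref{bh} gives $\beta_{T_w}(x)<0$. Using $d_G(w)=1+d_{T_w}(w)$ together with $\sum_{f\in E_{T_w}(w)}x^f=\beta_{T_w}(x)-d_{T_w}(w)x_w^k$ in Eq. (\ref{eigen}) at $w$ multiplied by $x_w$, one obtains $(\la-1)x_w^k=x^e+\beta_{T_w}(x)$; replacing $x^e$ by $(\la-1)t^k$ yields $\beta_{T_w}(x)=(\la-1)(x_w^k-t^k)$, whence $x_w^k>t^k$ (as $\beta_{T_w}(x)<0$ and $\la-1<0$), i.e. $|x_w|>t$. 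Combining, $|x_u|<t<|x_w|$, which completes the adjacent case and the lemma.

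The routine part is checking that the two coalescence decompositions are legitimate---each non-$T$ part a nontrivial connected branch with the indicated root, each $T$-part a power hypertree and therefore odd-bipartite---so that Lemma \ref{bh} applies and upgrades the a priori inequalities $\beta_{\bar{T}}(x)\le0$ and $\beta_{T_w}(x)\le0$ to strict ones; this strictness is exactly what produces $|x_u|<t$ and $t<|x_w|$. I expect the one spot that needs genuine care to be the reduction step: one must verify from the definition of $H^k$ that along the path from $r$ to $w$ consecutive degree-$\ge2$ vertices are joined by single edges whose remaining vertices are pendent, and that every intermediate such vertex indeed has degree $\ge2$ in $G$, so that the adjacent case applies to each link of the chain.
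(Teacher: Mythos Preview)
Your proof is correct and uses the same ingredients as the paper (reduction to a single edge of $T$, Lemma~\ref{nonzero} to make everything nonzero, the pendent eigenvector equation, and Lemma~\ref{bh} for strictness), but the organisation is genuinely different. The paper applies Lemma~\ref{bh} \emph{once}, at $w$ with the branch $\bar T$ consisting of everything beyond $w$ except $e$, obtaining $(d_T(w)-1)|x_w|^k<\sum_{\tilde e\in E_T(w)\setminus\{e\}}|x^{\tilde e}|$; it then substitutes this and the explicit formula $|x_v|=\bigl(|x_u||x_w|/(1-\la)\bigr)^{1/2}$ into the eigenvector equation at $w$, and after some algebra arrives at $(1-\la)^{k/2}|x_w|^{k/2}>|x_u|^{k/2}$, hence $|x_w|>|x_u|$. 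You instead apply Lemma~\ref{bh} \emph{twice}: once at $u$ (with the branch through $e$) to get $|x_u|^k<(1-\la)t^k<t^k$, and once at $w$ (with the branch beyond $w$) which, combined with the eigenvector equation at $w$, gives $(\la-1)(x_w^k-t^k)=\beta_{T_w}(x)<0$, hence $|x_w|>t$. The sandwich $|x_u|<t<|x_w|$ is slightly more information than the paper extracts and the algebra is lighter; the paper's single application of Lemma~\ref{bh} is more economical in how many decompositions must be checked.

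One small point: your parenthetical ``so $k-2\ge 2$'' is not justified by the hypotheses. The lemma allows $k=2$, in which case $e=\{u,w\}$ has no pendent vertex and your intermediate $t$ is undefined. The fix is easy: for $k=2$ your first application of Lemma~\ref{bh} at $u$ already gives $x_u^2+x^e<0$, i.e.\ $|x_u|^2<|x^e|=|x_u||x_w|$, hence $|x_u|<|x_w|$ directly. (The paper's computation also specialises correctly to $k=2$ since every occurrence of $|x_v|$ there carries the exponent $k-2$.)
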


\begin{proof}
By Lemma \ref{nonzero}, $x_i\neq0$ for any vertex $i\in V(T)$.
It suffices to consider three vertices $u, v, w$ in a common edge $e \in E(T)$, where $d(u)\ge 2$, $d(v)=1$, $d(w)\ge 2$, and $u$ lies on the path from $r$ to $w$.
We will show that $|x_u|< |x_w|$.
By the eigenvector equation of $x$ at $v$, noting that $0<\la:=\la_{\min}(G)<1$, by Lemma \ref{xuv} we have
$$(1-\lambda)|x_v|^k=|x^e|=|x_u\|x_v|^{k-2}|x_w|,$$
which implies that
\begin{equation}\label{xv}
|x_v|=\left(\frac{|x_u\|x_w|} {1-\lambda}\right)^{\frac{1}{2}}.
\end{equation}

We can write $G=\bar{G}_0(w) \diamond \bar{T}(w)$, where $\bar{G}_0$ contains $G_0$ and the edge $e$ as the only one containing $w$.
Then by Lemma \ref{bh}, noting $x_w \ne 0$,
$$\beta_{\bar{T}}(x)=(d_T(w)-1)x_w^k+ \sum_{\tilde{e} \in E_T(w)\backslash\{e\}}x^{\tilde{e}} < 0.$$
By Lemma \ref{prop1}(1), we have
\begin{equation}\label{xw}
(d_T(w)-1)|x_w|^k <  \sum\limits_{\tilde{e}\in E_T(w)\backslash\{e\}}|x^{\tilde{e}}|.
\end{equation}

Considering the eigenvector equation of $x$ at $w$, by Lemma \ref{xuv}, Eq. (\ref{xv}) and Eq. (\ref{xw}), we have
\begin{eqnarray*}
  (d_T(w)-\lambda)|x_w|^k &=& \sum\limits_{\tilde{e}\in E_T(w)\backslash\{e\}}|x^{\tilde{e}}|+|x_u\|x_v|^{k-2}|x_w| \\
    &> & (d_T(w)-1)|x_w|^k+|x_u| \left(\frac{|x_u\|x_w|} {1-\lambda}\right)^{\frac{k}{2}-1}|x_w|\\
    &=& (d_T(w)-1)|x_w|^k+({1-\lambda})^{1-\frac{k}{2}}|x_u|^{\frac{k}{2}}|x_w|^{\frac{k}{2}}.
\end{eqnarray*}
So
$$({1-\lambda})|x_w|^k>  ({1-\lambda})^{1-\frac{k}{2}}|x_u|^{\frac{k}{2}}|x_w|^{\frac{k}{2}},$$
and hence
$$ |x_w|^{\frac{k}{2}}> ({1-\lambda})^{\frac{k}{2}} |x_w|^{\frac{k}{2}}>|x_u|^{\frac{k}{2}}.$$
\end{proof}

%
%
%

Denote by $G=G_0(r)\diamond P_m^k(r)$ the coalescence of $G_0$ and $P_m^k$ by identifying one vertex of $G_0$ and one pendent vertex of $P_m^k$ and forming a new vertex $r$.

\begin{lemma}\label{formula}
Let $G=G_0(r)\diamond P_m^k(r)$ be a connected non-odd-bipartite $k$-uniform hypergraph, where $P_m^k$ is a hyperpath with $m$ edges.
Starting from the root $r$, label edges of $P_m^k$ as $e_m, e_{m-1},\ldots,e_1$, and some vertices of those edges as
\begin{equation}\label{lab}
r=2m, 2m-1, 2m-2, \ldots, 2i, 2i-1, 2i-2, \ldots, 2,1,0,
\end{equation}
where  $\{2i, 2i-1, 2i-2)\} \subseteq e_i$ for $i \in [m]$, $d_{P_m}(2i)=2$ and $d_{P_m}(2i-1)=1$ for $i \in [m-1]$, $d_{P_m}(2m)=d_{P_m}(0)=1$.
If $x$ is a first  eigenvector of $G$ and $x_r\neq 0$,
Then
\begin{equation}\label{recur}|x_{2i}|={f_i(\la_{\min}(G))}^{\frac{2}{k}}|x_0|,\ i\in[m],
\end{equation}
where $f_i(x)$ is defined recursively as $f_0(x)=1$, $f_1(x)=(1-x)^{\frac{k}{2}}$,
$$f_{i+1}(x)=(2-x)(1-x)^{\frac{k}{2}-1}f_i(x)-f_{i-1}(x),\ i\in[m-1].$$
Furthermore,
$0<f_i(\la_{\min}(G))<1$, and $f_i(\la_{\min}(G))$ is strictly decreasing in $i$.
\end{lemma}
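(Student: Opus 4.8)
The plan is to set up the eigenvector equations along the hyperpath and extract a scalar recurrence for the quantities $|x_{2i}|$, then analyze that recurrence. Write $\lambda := \lambda_{\min}(G)$, and recall from Lemma~\ref{nonzero} (since $x_r = x_{2m}\neq 0$ and $r$ lies on the path from $r$ to every vertex of $P_m^k$) that $x_j \neq 0$ for every vertex $j\in V(P_m^k)$. First I would record, using Eq.~(\ref{eigen}) at the degree-one vertices $2i-1$ and Lemma~\ref{xuv}, the relation $(1-\lambda)|x_{2i-1}|^k = |x_{2i}|\,|x_{2i-1}|^{k-2}\,|x_{2i-2}|$, i.e.
\[
|x_{2i-1}| = \left(\frac{|x_{2i}|\,|x_{2i-2}|}{1-\lambda}\right)^{\frac12},\qquad i\in[m],
\]
exactly as in Eq.~(\ref{xv}); here $0<\lambda<1$ by Lemmas~\ref{mindegree} and the non-odd-bipartiteness, and $0<\lambda$ because $G$ is connected non-odd-bipartite. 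Next, I would write the eigenvector equation at the interior vertex $2i$ (which has degree $2$ in $P_m^k$, lying in $e_i$ and $e_{i+1}$), for $i\in[m-1]$:
\[
(2-\lambda)|x_{2i}|^k = |x_{2i-1}|^{k-2}|x_{2i-2}|\,|x_{2i}| + |x_{2i+1}|^{k-2}|x_{2i+2}|\,|x_{2i}|,
\]
again via Lemma~\ref{xuv} to pass to absolute values (the signs of the $x^e$ terms are controlled by Lemma~\ref{prop1}(1)). Substituting the boxed formula for $|x_{2i-1}|$ and $|x_{2i+1}|$ turns this into a relation purely among $|x_{2i-2}|, |x_{2i}|, |x_{2i+2}|$.

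Now I would make the substitution $|x_{2i}| = g_i^{2/k}\,|x_0|$ for suitable positive scalars $g_i$ (with $g_0 = 1$), plug into the two displayed identities, and simplify. The degree-one relation at vertex $1$ gives $g_1 = (1-\lambda)^{k/2}$ after clearing, matching $f_1(\lambda)$. The interior relation at $2i$, after dividing through by the common power of $|x_0|$ and by $|x_{2i}|$ and raising to the appropriate power, should collapse to
\[
(2-\lambda)(1-\lambda)^{\frac{k}{2}-1} g_i = g_{i+1} + g_{i-1},
\]
i.e.\ $g_{i+1} = (2-\lambda)(1-\lambda)^{k/2-1} g_i - g_{i-1}$, which is precisely the recursion defining $f_{i+1}$. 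Hence $g_i = f_i(\lambda)$ for all $i$, and $|x_{2i}| = f_i(\lambda)^{2/k}|x_0|$, which is Eq.~(\ref{recur}). The bookkeeping in this step—keeping track of which edge contributes which product and verifying that the exponents combine to give exactly the stated recursion—is the part requiring care, but it is routine algebra once the two base identities are in hand.

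For the final "furthermore" claims I would argue as follows. Since $|x_{2i}| = f_i(\lambda)^{2/k}|x_0|$ and $|x_{2i}|, |x_0| > 0$, we get $f_i(\lambda) > 0$ for every $i\in[m]$ immediately. For strict monotonicity and the bound $f_i(\lambda)<1$: the vertices $0 = x_0$ and $2i$ (for $i\ge 1$) both have degree $\ge 2$ in... no—rather, I apply Lemma~\ref{increase} with the role of the "root" played by $r$ and the two vertices $2i, 2i-2$ (both of degree $\ge 2$ in $P_m^k$ when $1\le i\le m-1$; for the top edge one uses $2m=r$ and $2(m-1)$, and $x_r\neq 0$ is given), which yields $|x_{2i-2}| < |x_{2i}|$ is the wrong direction—Lemma~\ref{increase} gives $|x_u|<|x_w|$ when $u$ is between $r$ and $w$, and here $2i$ is closer to $r$ than $2i-2$, so $|x_{2i}| < |x_{2i-2}|$ when both have degree $\ge 2$. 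Translating through $|x_{2i}| = f_i(\lambda)^{2/k}|x_0|$ gives $f_i(\lambda) < f_{i-1}(\lambda)$ for $2\le i\le m-1$; the cases $i=1$ and $i=m$ are handled by applying Lemma~\ref{increase} with the endpoint vertices (noting $d_{P_m}(0)=1$ forces a separate application of Eq.~(\ref{eigen}) at $0$, or one simply observes $f_1(\lambda)=(1-\lambda)^{k/2}<1=f_0(\lambda)$ directly since $0<\lambda<1$, and then monotonicity propagates). Strict decrease together with $f_0(\lambda)=1$ gives $f_i(\lambda)<1$ for all $i\ge 1$. The only genuinely delicate point is making sure Lemma~\ref{increase}'s hypothesis $d(u),d(w)\ge 2$ is met at the two ends of the path and invoking it with the correct orientation; everything else is substitution and the elementary inequality $0<1-\lambda<1$.
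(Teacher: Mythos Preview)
Your approach is essentially identical to the paper's: derive the relation $|x_{2i-1}|=\bigl(|x_{2i}||x_{2i-2}|/(1-\lambda)\bigr)^{1/2}$ at the degree-one vertices, substitute into the eigenvector equation at the degree-two vertices $2i$ to obtain the three-term recurrence for $|x_{2i}|^{k/2}$, and then invoke Lemma~\ref{increase} for the strict monotonicity of $f_i(\lambda)$. Two small points to tighten: for the base case $g_1=(1-\lambda)^{k/2}$ you need the extra observation $|x_0|=|x_1|$ (from Lemma~\ref{xuv}, since $E(0)=E(1)=\{e_1\}$), which the paper records explicitly before setting up the recursion; and for the step $f_m(\lambda)<f_{m-1}(\lambda)$, Lemma~\ref{increase} applies directly once you note $d_G(r)=d_{G_0}(r)+1\ge 2$, so no separate endpoint argument is actually needed at $i=m$.
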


\begin{proof}
By Lemma \ref{nonzero}, $x_v \ne 0$ for each $v \in V(P_{m})$.
 Let $\la:=\la_{\min}(G)$.
 Then $0<\la <1$ by Lemma \ref{mindegree}. 
By Lemma \ref{xuv} and Eq. (\ref{eigen}), $$|x_0|= |x_1|, \;|x_2|=(1-\la)|x_0|.$$
So, by Lemma \ref{xuv}, Lemma \ref{prop1}(1) and Eq. (\ref{xv}), considering the eigenvector equation of $x$ at the vertex $2i$ ($1 \le i \le m-1$), we have
\begin{eqnarray*}
   (2-\lambda)|x_{2i}|^{k-1} &=& |{x_{2i-1}}|^{k-2}|x_{2i-2}|+|{x_{2i+1}}|^{k-2}|x_{2i+2}| \\
   &=& \left(\frac{|x_{2i}x_{2i-2}|} {1-\lambda}\right)^{\frac{k}{2}-1}|x_{2i-2}|+\left(\frac{|x_{2i}x_{2i+2}|} {1-\lambda}\right)^{\frac{k}{2}-1}|x_{2i+2}|\\
   &=& ({1-\lambda})^{1-\frac{k}{2}} |x_{2i}|^{\frac{k}{2}-1}\left(|x_{2i-2}|^{\frac{k}{2}}+|x_{2i+2}|^{\frac{k}{2}}\right).
\end{eqnarray*}
Thus, for $i=1,\ldots,m-1$,
$$ |x_{2i+2}|^{\frac{k}{2}}=(2-\lambda)({1-\lambda})^{\frac{k}{2}-1}|x_{2i}|^{\frac{k}{2}}-|x_{2i-2}|^{\frac{k}{2}}.$$
It is easy to verify that for $i \in [m]$,
$$|x_{2i}|^{\frac{k}{2}}={f_i(\la)}|x_0|^{\frac{k}{2}}.$$
By Lemma \ref{increase}, for $i\in[m]$,
$0<f_i(\la)<1$, and $f_i(\la)$ is strictly decreasing in $i$.
\end{proof}

\section{Perturbation of the least  eigenvalues}

We first give a perturbation result on the least  eigenvalues under relocating an odd-bipartite branch.
Let $G_0$, $H$ be two vertex-disjoint hypergraphs, where $v_1$, $v_2$ are two distinct vertices of $G_0$, and $u$ is a vertex of $H$ (called the \emph{root} of $H$).
Let $G = G_1(v_2)\diamond H(u)$ and $\tilde{G} =G_1(v_1)\diamond H(u)$.
We say that $\tilde{G}$ is obtained from $G$ by \emph{relocating} $H$ rooted at $u$ from $v_2$ to $v_1$; see Fig. \ref{relo}.

\begin{figure}[htbp]
\includegraphics[scale=.8]{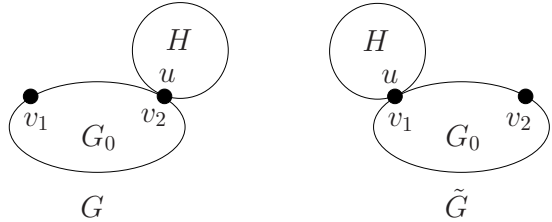}
\caption{\small Relocating $H$ from $v_2$ to $v_1$}\label{relo}
\end{figure}

\begin{lemma}\label{movedge}
Let $G=G_0(v_2)\diamond H(u)$ and $\tilde{G}=G_0(v_1)\diamond H(u)$ be connected non-odd-bipartite $k$-uniform hypergraphs, where $H$ is odd-bipartite.
If $x$ is a first  eigenvector of $G$ such that $|x_{v_1}|\ge |x_{v_2}|$,  then
$$\lambda_{\min}(\tilde{G})\le \lambda_{\min}(G),$$
with equality if and only if $x_{v_1}=x_{v_2}=0$, and $\tilde{x}$ defined in (\ref{case2}) is a first eigenvector of $\tilde{G}$.
\end{lemma}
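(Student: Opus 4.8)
The plan is to bound $\lambda_{\min}(\tilde G)$ from above by feeding suitable test vectors, built from the given first eigenvector $x$ of $G$, into the variational formula \eqref{form2}, and to read off the equality case from Lemma~\ref{semi}(2) (a test vector attains the bound iff it is a first eigenvector). Normalize $\|x\|_k=1$ and write $\lambda=\lambda_{\min}(G)$. Since $E(G_0)$ and $E(H)$ partition $E(G)$, formula \eqref{form} splits the energy, $\lambda=\Q(G)x^k=\Q(G_0)(x|_{G_0})^k+\Q(H)(x|_H)^k=:t+s$, and with $a=\sum_{w\in V(H)\setminus\{u\}}x_w^k$ and $b=\sum_{w\in V(G_0)}x_w^k$ one has $a,b\ge0$, $a+b=1$. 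The one preliminary identity I would establish is
\[
s=\Q(H)(x|_H)^k=\lambda a+\beta_H(x),
\]
which follows from $\A y^k=y^{\top}(\A y^{k-1})$ together with the eigenvalue equations \eqref{eigen} of $x$ on the vertices of $H$ (for $v\ne u$ these coincide with those in $G$, and the vertex $u$ contributes precisely $\beta_H(x)$); equivalently $t=\lambda b-\beta_H(x)$, as already established in the proof of Lemma~\ref{bh}. By Lemma~\ref{bh}, $\beta_H(x)\le0$, and $\beta_H(x)<0$ whenever $x_u\ne0$. I would then split on whether $x_{v_2}=0$.

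Suppose first $x_{v_2}\ne0$. Put $r=x_{v_1}/x_{v_2}$ and $c=r^k$; since $|x_{v_1}|\ge|x_{v_2}|$ and $k$ is even, $c\ge1$. Define $\tilde x$ on $\tilde G$ by $\tilde x|_{G_0}=x|_{G_0}$ and $\tilde x|_{V(H)}=r\cdot x|_{V(H)}$; this is consistent at the identified vertex, as $r\,x_u=r\,x_{v_2}=x_{v_1}$. Then $\Q(\tilde G)\tilde x^k=t+cs$ and $\|\tilde x\|_k^k=b+ca$, so by \eqref{form2},
\[
\lambda_{\min}(\tilde G)\le\frac{t+cs}{b+ca}=\lambda+\frac{(c-1)\,\beta_H(x)}{b+ca}\le\lambda .
\]
As $u=v_2$ in $G$ and $x_{v_2}\ne0$, Lemma~\ref{bh} gives $\beta_H(x)<0$, so the last inequality is strict when $c>1$. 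When $c=1$ the test vector only attains $\lambda$; if $\lambda_{\min}(\tilde G)=\lambda$ then $\tilde x$ would be a first eigenvector of $\tilde G$ by Lemma~\ref{semi}(2), and subtracting the equation \eqref{eigen} of $\tilde x$ at $v_2$ in $\tilde G$ (where $v_2$ is an interior vertex of $G_0$) from the equation \eqref{eigen} of $x$ at $v_2=u$ in $G$ would force $d_H(u)x_u^{k-1}+\sum_{e\in E_H(u)}x^{e\setminus\{u\}}=0$, and hence $\beta_H(x)=0$ after multiplying by $x_u\ne0$, a contradiction. Therefore $x_{v_2}\ne0$ yields the strict inequality $\lambda_{\min}(\tilde G)<\lambda_{\min}(G)$.

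Now suppose $x_{v_2}=0$. Then Lemma~\ref{prop1}(2) gives $\sum_{e\in E_{G_0}(v_2)}x^{e\setminus\{v_2\}}=0$ and $x^{e\setminus\{v_2\}}=0$ for every $e\in E_H(v_2)$, so in particular $\beta_H(x)=0$. If in addition $x_{v_1}\ne0$, then $x|_{G_0}\ne0$, so by Corollary~\ref{prop3}(2), $x|_{G_0}$ is a first eigenvector of $G_0$ with $\lambda_{\min}(G_0)=\lambda$; its value at $v_1$ being $x_{v_1}\ne0$, Corollary~\ref{prop3}(1) applied to the coalescence $\tilde G=G_0(v_1)\diamond H(u)$ with the eigenvector $x|_{G_0}$ gives $\lambda_{\min}(\tilde G)<\lambda_{\min}(G_0)=\lambda$. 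If instead $x_{v_1}=x_{v_2}=0$, then the vector $\tilde x$ of \eqref{case2}---in this regime just $x$ transcribed onto $V(\tilde G)$, which is legitimate because the identified vertex receives $x_{v_1}=x_u=0$---satisfies $\|\tilde x\|_k=1$ and $\Q(\tilde G)\tilde x^k=\Q(G_0)(x|_{G_0})^k+\Q(H)(x|_H)^k=\lambda$, whence $\lambda_{\min}(\tilde G)\le\lambda$ with equality iff $\tilde x$ realizes the minimum in \eqref{form2}, i.e.\ iff $\tilde x$ is a first eigenvector of $\tilde G$. Combining the three cases gives $\lambda_{\min}(\tilde G)\le\lambda_{\min}(G)$, with equality exactly when $x_{v_1}=x_{v_2}=0$ and the vector $\tilde x$ of \eqref{case2} is a first eigenvector of $\tilde G$.

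I expect the delicate step to be the boundary regime $|x_{v_1}|=|x_{v_2}|\ne0$ inside the first case: there the rescaled test vector does not beat $\lambda$, and strictness has to be recovered indirectly---by showing that optimality of $\tilde x$ would make $\beta_H(x)=0$ through the eigenvalue equation at $v_2$, contradicting the strict form of Lemma~\ref{bh}. The rest is routine energy bookkeeping, with $\beta_H$ serving as the interface term; the scheme is the hypergraph analogue of the simple-graph argument in \cite{WF}.
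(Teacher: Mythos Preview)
Your proof is correct and follows essentially the same route as the paper's: build a rescaled test vector on $\tilde G$, plug into the variational formula, and reduce the quotient to $\lambda+(c-1)\beta_H(x)/\|\tilde x\|_k^k$ via the identity $s=\lambda a+\beta_H(x)$; then handle $x_{v_2}=0$ through Corollary~\ref{prop3}. The only cosmetic difference is that the paper normalizes $x_{v_1}\ge0$ and splits on the sign of $x_{v_2}$ (three cases), whereas you absorb the sign into the ratio $r=x_{v_1}/x_{v_2}$ and treat $x_{v_2}\ne0$ in one stroke; the boundary subcase $|x_{v_1}|=|x_{v_2}|\ne0$ is disposed of identically in both, by comparing the eigenvalue equations of $x$ and $\tilde x$ at $v_2$ to force $\beta_H(x)=0$ against Lemma~\ref{bh}.
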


\begin{proof}
Let $x$ be a first  eigenvector of $G$ such that $\|x\|_k=1$ and $x_{v_1}\ge 0$.
We divide the discussion into three cases.
Denote $\la:=\lambda_{\min}(G)$.

\textbf{Case 1:} $x_{v_2}>0$.
Write $x_{v_1}=\delta x_{v_2}$, where $\delta \ge  1$.
Define $\tilde{x}$ on $\tilde{G}$ by
\begin{equation}\label{evtld1}
\tilde{x}_v=
\begin{cases}
x_v,&\text{if } v\in V(G_0);\\
\delta x_v,&\text{if } v\in V(H)\backslash\{u\}.
\end{cases}
\end{equation}
Then $\|\tilde{x}\|_k^k=1+(\delta^k-1)\sum_{v\in {V(H)\backslash\{u\}}}x_v^k$, and
\begin{eqnarray*}
  \Q(\tilde{G})\tilde{x}^k &=& \sum_{e\in E(\tilde{G})}(\tilde{x}_e^{k}+k \tilde{x}^e) \\
            &=& \Q(G)x^k +(\delta^k-1)\sum_{e\in E(H)}(x_e^{k}+kx^e)\\
            &=& \la+(\delta^k-1)\sum_{e\in E(H)}(x_e^{k}+kx^e).
\end{eqnarray*}

By the eigenvector equation of $x$ at each vertex $v\in V(H)\backslash \{u\}$,
\begin{equation}\label{ehv}
d(v)x_v^k+\sum_{e\in E_H(v)}x^e=\la x_v^k.
\end{equation}
By the eigenvector equation of $x$ at $u$,
\begin{equation}\label{egu}
d(u)x_u^k+\sum_{e\in E_G(u)}x^e=\alpha_{G_0}(x)+\beta_H(x)=\la x_u^k,
\end{equation}
where $\alpha_{G_0}(x):=d_{G_0}(u)x_u^k+ \sum_{e\in E_{G_0}(u)}x^e$.
By Eq. (\ref{ehv}) and Eq. (\ref{egu}), we have
$$\alpha_{G_0}+\sum_{e\in E(H)}(x_e^{k}+kx^e)=\la \sum_{v\in V(H)}x_v^k.$$
So
\begin{eqnarray*}
  \sum_{e\in E(H)}(x_e^{k}+kx^e) &=& \la \sum_{v\in V(H)}x_v^k-\alpha_{G_0}(x) \\
  &=& \la \sum _{v\in V(H)}x_v^k-(\la x_u^k-\beta_H(x))\\
  &=& \la \sum_{v\in {V(H)\backslash\{u\}}}x_v^k+\beta_H(x).
\end{eqnarray*}
Thus
\begin{eqnarray*}
   \Q(\tilde{G})\tilde{x}^k &=&  \la+(\delta^k-1)\left(\la \sum_{v\in {V(H)\backslash\{u\}}}x_v^k+\beta_H(x)\right)\\
            &=& \la \left(1+(\delta^k-1)\sum_{v\in {V(H)\backslash\{u\}}}x_v^k\right)+(\delta^k-1)\beta_H(x)\\
            &=& \la \|\tilde{x}\|_k^k+(\delta^k-1)\beta_H(x).
\end{eqnarray*}
As $x_{v_2} \ne 0$, $\beta_H(x) < 0$ by Lemma \ref{bh},
\[
\lambda_{\min}(\tilde{G})\le  \frac{\Q(\tilde{G})\tilde{x}^k} {\|\tilde{x}\|_k^k}
=\lambda+\frac{(\delta^k-1)\beta_H(x)} {\|\tilde{x}\|_k^k}\le \la=\lambda_{\min}(G),
\]
where the first equality holds if and only if $\tilde{x}$ is a first  eigenvector of $\tilde{G}$,
and the second equality holds if and only if $\delta=1$, i.e., $x_{v_1}=x_{v_2}$.
By the eigenvector equations of $x$ and $\tilde{x}$ at $v_2$ respectively, we will get $\beta_H(x)=0$, a contradiction.
So, in this case, $\lambda_{\min}(\tilde{G})< \lambda_{\min}(G)$.

\textbf{Case 2:} $x_{v_2}=0$.
First assume $x_{v_1}=0$.
Define $\tilde{x}$ on $\tilde{G}$ by
\begin{equation}\label{case2}
\tilde{x}_v=
\begin{cases}
x_v,&\text{if } v\in V(G_0);\\
x_v,&\text{if } v\in V(H)\backslash\{u\}.
\end{cases}
\end{equation}
Then $\|\tilde{x}\|_k^k=1$,
and
$$\la_{\min}(\tilde{G}) \le \Q(\tilde{G})\tilde{x}^k=\Q(G)x^k =\la_{\min}(G),$$
with equality if and only if $\tilde{x}$ is a first  eigenvector of $\tilde{G}$.

Now assume that $x_{v_1}>0$.
By Corollary \ref{prop3}(2) and its proof, $\la_{\min}(G)=\la_{\min}(G_0)$ as $x_{u}=0$ and $x|_{G_0} \ne 0$;
furthermore, $x|_{G_0}$ is a first  eigenvector of $G_0$.
By  Corollary \ref{prop3}(1), $\la_{\min}(G_0)>\la_{\min}(\tilde{G})$ as $(x|_{G_0})_{v_1}\ne 0$,
thinking of $v_1$ a coalescence vertex between $G_0$ and $H$ in $\tilde{G}$.
So $\la_{\min}(\tilde{G}) < \la_{\min}(G)$.

\textbf{Case 3:} $x_{v_2}<0$.
Write $x_{v_1}=-\delta x_{v_2}$, where $\delta \ge  1$.
Define $\tilde{x}$ on $\tilde{G}$ by
\begin{equation}\label{evtld2}
\tilde{x}_v=
\begin{cases}
x_v,&\text{if } v\in V(G_0);\\
-\delta x_v,&\text{if } v\in V(H)\backslash\{u\}.
\end{cases}
\end{equation}
By a similar discussion to Case 1 by replacing $\delta$ by $-\delta$,  we also have
$\lambda_{\min}(\tilde{G})< \lambda_{\min}(G)$.
\end{proof}

%
%
%

\begin{coro}\label{gst}
Let $G$ be a connected non-odd-bipartite $k$-uniform hypergraph,
 and  $G_{s,t}$ be the hypergraph obtained by coalescing $G$ with two hyperpaths $P_s^k$ and $P_t^k$
 by identifying a pendent vertex of $P_s^k$ and a pendent vertex of $P_t^k$ both with a vertex $u$ of $G$, where $s \ge t \ge 1$.
 If $x$ is a first Q-eigenvector of $G_{s,t}$ and $x_u\neq 0$, then
$$\lambda_{\min}(G_{s,t})>\lambda_{\min}(G_{s+1,t-1}).$$
\end{coro}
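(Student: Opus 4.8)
The plan is to realize $G_{s+1,t-1}$ as the result of relocating one of the two hyperpaths attached at $u$ a little further along the other hyperpath, and then apply Lemma~\ref{movedge}. Concretely, write $G_{s,t}=G_0(v_2)\diamond P_1^k(u')$, where $G_0$ is $G$ together with the hyperpath $P_t^k$ attached at $u$ and with $s-1$ of the edges of the longer hyperpath, so that $v_2=u$ is the coalescence vertex and $H=P_1^k$ is the single remaining pendent edge $e_s$ of $P_s^k$ rooted at its nonpendent vertex $u'$. Here $H=P_1^k$ is trivially odd-bipartite (any single $k$-edge with $k$ even is odd-bipartite: split it $1$ versus $k-1$). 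Let $v_1$ be the neighbour of $u$ along the $P_t^k$-branch having degree $\ge 2$ in $G_{s,t}$ (i.e., the vertex labelled $2$ if we label that branch as in Lemma~\ref{formula}); relocating $H=e_s$ from $v_2=u$ to $v_1$ turns the $P_s^k$-branch into a $P_{s-1}^k$-branch rooted at $u$ and lengthens the $P_t^k$-branch by one edge, i.e., produces exactly $G_{s+1,t-1}$.

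With this setup, Lemma~\ref{movedge} gives $\lambda_{\min}(G_{s+1,t-1})\le\lambda_{\min}(G_{s,t})$ provided $|x_{v_1}|\ge|x_{v_2}|=|x_u|$, and the inequality is strict unless $x_{v_1}=x_{v_2}=0$ and a certain extended vector $\tilde x$ is a first eigenvector of $G_{s+1,t-1}$. So the two things to check are: (i) $|x_{v_1}|\ge|x_u|$, and (ii) that equality cannot occur given the hypothesis $x_u\ne0$. For (i), since $x_u\ne0$, Lemma~\ref{nonzero} gives that $x$ is nonzero on all of the $P_t^k$-branch, and then Lemma~\ref{increase} (applied with root $r=u$ and the hyperpath $T=P_t^k$, taking $w=v_1$ when $t\ge2$) yields $|x_u|<|x_{v_1}|$ whenever $v_1$ has degree $\ge2$; here $d(v_1)\ge2$ because the $P_t^k$-branch contributes one edge at $v_1$ and the relocated edge will be the second — but in $G_{s,t}$ itself, if $t\ge2$ then $v_1$ already has degree $2$. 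When $t=1$ there is no interior vertex of the $P_t^k$-branch to use as $v_1$; instead I would take $v_1$ to be $u$ itself only formally — rather, handle $t=1$ by relocating $e_s$ onto the pendent vertex of $P_1^k$, for which one checks $|x_{v_1}|\ge|x_u|$ directly from Eq.~(\ref{xv}) and Lemma~\ref{increase} applied to the $P_s^k$-branch (the entries grow away from $u$). For (ii): if equality held, Lemma~\ref{movedge} forces $x_{v_2}=x_u=0$, directly contradicting $x_u\ne0$. Hence the inequality is strict.

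The main obstacle is bookkeeping: verifying that the relocation of the single pendent edge $e_s$ of $P_s^k$ from $u$ to the appropriate degree-$\ge2$ vertex $v_1$ of the $P_t^k$-branch genuinely produces $G_{s+1,t-1}$ (up to isomorphism), and choosing $v_1$ correctly so that the degree hypothesis $d(v_1)\ge2$ of Lemma~\ref{increase} is met — in particular handling the boundary case $t=1$ separately, where $v_1$ must be taken in the $P_s^k$-branch rather than the $P_t^k$-branch and one argues $|x_{v_1}|\ge|x_{v_2}|$ from the monotonicity in Lemma~\ref{increase}/Lemma~\ref{formula} there. Once the relocation is correctly identified and the strict inequality $|x_{v_1}|>|x_{v_2}|$ (or at least $\ge$, with equality excluded by $x_u\ne0$) is in hand, the conclusion $\lambda_{\min}(G_{s,t})>\lambda_{\min}(G_{s+1,t-1})$ is immediate from Lemma~\ref{movedge}.
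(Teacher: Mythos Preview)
Your relocation is set up backwards. To pass from $G_{s,t}$ to $G_{s+1,t-1}$ you must \emph{shorten} the $P_t^k$-branch and \emph{lengthen} the $P_s^k$-branch. The operation you describe---taking $H$ to be an edge of $P_s^k$ and moving it onto the $P_t^k$-branch---produces $G_{s-1,t+1}$, not $G_{s+1,t-1}$; your own sentence ``turns the $P_s^k$-branch into a $P_{s-1}^k$-branch \ldots\ and lengthens the $P_t^k$-branch by one edge'' says exactly this. In addition, the identification $v_2=u$ is inconsistent with letting $G_0$ contain $s-1$ edges of $P_s^k$ attached at $u$: if $G_0$ is to remain connected and $H$ is a single edge of $P_s^k$, then $H$ must be the pendent edge of $P_s^k$ and $v_2$ is its degree-$2$ neighbour at the far end, not $u$.

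The paper instead relocates the pendent edge of the \emph{shorter} path $P_t^k$ (rooted at $\bar 2$ in the labeling of Lemma~\ref{formula}) to the far pendent vertex $0$ of $P_s^k$; this genuinely yields $G_{s+1,t-1}$. The inequality needed for Lemma~\ref{movedge} is then $|x_0|\ge |x_{\bar 2}|$, a comparison between vertices in \emph{different} branches, so Lemma~\ref{increase} alone (which only gives monotonicity within a single branch) is not enough. One must use Lemma~\ref{formula}: from $|x_u|=f_s(\lambda)^{2/k}|x_0|=f_t(\lambda)^{2/k}|x_{\bar 0}|$ together with $f_s(\lambda)\le f_t(\lambda)$ (since $s\ge t$) one gets $|x_0|\ge |x_{\bar 0}|$, and then $|x_{\bar 0}|>|x_{\bar 2}|>0$ gives the strict inequality required. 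Your argument never establishes any cross-branch comparison, and your separate $t=1$ treatment (moving an edge of $P_s^k$ to the pendent vertex of $P_1^k$) would require $|x_{\bar 0}|\ge |x_{2}|$, which fails in general.
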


\begin{proof}
Using the method of labeling vertices as in Eq. (\ref{lab}), we label some of the vertices $P_s^k$ as
$$u=2s, 2s-1, 2s-2, \ldots, 2,1,0,$$
and label some of the vertices of $P_t^k$ as
$$u =2\bar{t}, 2\bar{t}-1, 2\bar{t}-2, \ldots,\bar{2},\bar{1},\bar{0}.$$
Then, by Lemma \ref{formula}
$$|x_u|=f_s(\la)^{2 \over k}|x_0|=f_t(\la)^{2 \over k}|x_{\bar{0}}|,$$
where $\la:=\lambda_{\min}(G_{s,t})$, and $f_i(x)$ is defined as in Lemma \ref{formula}.
As $s \ge t$, $0< f_s(\la) \le f_t(\la)$ also by Lemma \ref{formula}.
So, combining the eigenvector equation on $\bar{0}$, $|x_0| \ge |x_{\bar{0}}| > |x_{\bar{2}}|>0$.
Now relocating the pendent edge of $P_t^k$ rooted at $\bar{2}$ and attaching to the pendent vertex $0$ of $P_s^k$, we arrive at the hypergraph $G_{s+1,t-1}$.
The result follows by Lemma \ref{movedge}.
\end{proof}

\begin{lemma}\label{splust}
Let $G$ and $G_{s,t}$ be as defined in Corollary \ref{gst}.
Then
$$\lambda_{\min}(G_{s,t}) \ge \lambda_{\min}(G_{s+t,0}).$$
Furthermore, if $x$ is a first Q-eigenvector of $G_{s,t}$ and $x_u \neq 0$,
then
$$\lambda_{\min}(G_{s,t})>\lambda_{\min}(G_{s+t,0}).$$
\end{lemma}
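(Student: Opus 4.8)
The plan is to deduce the weak inequality $\lambda_{\min}(G_{s,t})\ge\lambda_{\min}(G_{s+t,0})$ by iterating Corollary \ref{gst} together with Lemma \ref{movedge}, and then to upgrade it to a strict inequality under the extra hypothesis $x_u\ne0$ by a careful case analysis on whether a first eigenvector vanishes at $u$ along the way.

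\emph{Step 1 (the weak inequality).} Consider the finite sequence of hypergraphs $G_{s,t}, G_{s+1,t-1}, \dots, G_{s+t,0}$, obtained by successively relocating the pendent edge of the shorter hyperpath onto the pendent vertex of the longer one. I would show that each consecutive step does not increase the least eigenvalue. Fix an index $i$ with $0\le i<t$ and let $x$ be a first eigenvector of $G_{s+i,t-i}$. If $x_u\ne0$, then Corollary \ref{gst} (applied with $s\leftarrow s+i$, $t\leftarrow t-i$) gives $\lambda_{\min}(G_{s+i,t-i})>\lambda_{\min}(G_{s+i+1,t-i-1})$ directly. If $x_u=0$, then I would invoke Lemma \ref{prop1}(2)/Lemma \ref{bh} applied to the branch $H=P_{t-i}^k$ rooted at $u$: since $\beta_H(x)\le0$ and $x_u=0$, one gets that $x$ vanishes suitably on the near-root portion of $P_{t-i}^k$, and in particular $|x_0|\ge|x_{\bar 2}|$ in the labelling of Corollary \ref{gst} (indeed one can check $x_{\bar 0}=x_{\bar 2}=0$ when $x_u=0$, so $|x_0|\ge 0=|x_{\bar 2}|$). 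Relocating the pendent edge of $P_{t-i}^k$ rooted at $\bar 2$ onto the vertex $0$ and applying Lemma \ref{movedge} (with $v_1=0$, $v_2=\bar 2$, $H$ the relocated pendent edge) yields $\lambda_{\min}(G_{s+i+1,t-i-1})\le\lambda_{\min}(G_{s+i,t-i})$. Chaining these $t$ inequalities gives $\lambda_{\min}(G_{s,t})\ge\lambda_{\min}(G_{s+t,0})$.

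\emph{Step 2 (strictness when $x_u\ne0$).} Now suppose $x$ is a first eigenvector of $G_{s,t}$ with $x_u\ne0$. If $t=0$ there is nothing to prove, so assume $t\ge1$. By Corollary \ref{gst}, the very first step is already strict: $\lambda_{\min}(G_{s,t})>\lambda_{\min}(G_{s+1,t-1})$. Combining this strict inequality with the weak chain $\lambda_{\min}(G_{s+1,t-1})\ge\lambda_{\min}(G_{s+t,0})$ obtained from Step 1, we conclude $\lambda_{\min}(G_{s,t})>\lambda_{\min}(G_{s+t,0})$, which is exactly the assertion.

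\emph{Anticipated obstacle.} The delicate point is Step 1 in the case $x_u=0$: I must be sure that the chosen first eigenvector $x$ of $G_{s+i,t-i}$ satisfies the hypothesis $|x_{v_1}|\ge|x_{v_2}|$ of Lemma \ref{movedge} with the correct identification of $v_1,v_2$, and that the relocated sub-hypergraph is genuinely an odd-bipartite branch of both $G_{s+i,t-i}$ and $G_{s+i+1,t-i-1}$ (a single power edge is odd-bipartite, so this is fine, but the decomposition $G_{s+i,t-i}=G_0(v_2)\diamond H(u)$ must be spelled out with $G_0$ absorbing everything except the relocated pendent edge). One also needs $G_{s+i+1,t-i-1}$ to remain connected and non-odd-bipartite throughout, which follows since $G$ is non-odd-bipartite and attaching hyperpaths (power hypertrees, hence odd-bipartite branches) preserves non-odd-bipartiteness. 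Finally, I should double-check the boundary case $t=1$, where the chain in Step 1 is empty and the single step from $G_{s,1}$ to $G_{s+1,0}$ is handled directly by Corollary \ref{gst} (if $x_u\ne0$) or by the $x_u=0$ argument above (noting $G_{s,1}=G_0(u)\diamond P_1^k(u)$ with $P_1^k$ a single odd-bipartite edge, so Lemma \ref{prop1}(2) forces $x^{e\setminus\{u\}}=0$ on that edge and Lemma \ref{movedge} applies).
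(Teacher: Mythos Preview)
Your iterative strategy has a genuine gap in Step~1, precisely at the point you flag as the ``anticipated obstacle.'' When $x_u=0$ for a first eigenvector $x$ of $G_{s+i,t-i}$, you assert that ``one can check $x_{\bar 0}=x_{\bar 2}=0$,'' but Lemma~\ref{prop1}(2) only yields $x^{e\setminus\{u\}}=0$ for the single edge $e$ of $P_{t-i}^k$ containing $u$; it does \emph{not} force $x$ to vanish along the rest of that path. Nothing you invoke excludes the possibility that $x_u=0$, $x|_{P_{s+i}^k}\equiv 0$, yet $x$ is nonzero further out on $P_{t-i}^k$ (once the degree-one vertices in the first edge vanish, the eigenvector equations on the remainder of $P_{t-i}^k$ decouple from $u$). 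In that situation $|x_0|=0<|x_{\bar 2}|$ and the hypothesis of Lemma~\ref{movedge} fails for your chosen $v_1=0$, $v_2=\bar 2$.

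The paper sidesteps this entirely by performing a \emph{single} relocation rather than iterating: it moves the whole path $P_t^k$ from its root $u$ to the far pendent vertex $0$ of $P_s^k$, obtaining $G_{s+t,0}$ in one step. The comparison required by Lemma~\ref{movedge} is then $|x_0|\ge|x_u|$, which is automatic when $x_u=0$ and strict when $x_u\ne 0$ (via Lemma~\ref{formula}, since $|x_u|=f_s(\lambda)^{2/k}|x_0|$ with $0<f_s(\lambda)<1$). The crucial difference is that the paper takes $v_2=u$, so the hypothesis $x_u=0$ directly trivializes the needed inequality, whereas your $v_2=\bar 2$ lies far from $u$ and inherits no control from $x_u=0$. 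Your Step~2 is fine once Step~1 is established, but the route through Corollary~\ref{gst} is an unnecessary detour.
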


\begin{proof}
Suppose that the labeling of some vertices of $P_s^k$ and $P_t^k$ is as in the proof of Corollary \ref{gst}.
Let $x$ be a first  eigenvector of $G_{s,t}$.
If $x_{u} = 0$, then $|x_{\bar{0}}| \ge |x_{u}|$; otherwise $|x_{\bar{0}}| > |x_{u}|$ by Lemma \ref{increase}.
Relocating $P_t^k$ rooted at $u$ and attaching to the pendent vertex $0$ of $P_s^k$, we arrive at the hypergraph $G_{s+t,0}$.
The result follows by Lemma \ref{movedge}.
\end{proof}

A hypergraph is called a \emph{minimizing hypergraph} in a certain class of hypergraphs if its least  eigenvalue attains the minimum
among all hypergraphs in the class.
Denoted by $\mathcal{T}_m(G_0)$ the class of hypergraphs with each obtained from a fixed connected non-odd-bipartite hypergraph $G_0$ by attaching
some hypertrees at some vertices of $G_0$ respectively
(i.e. identifying a vertex of a hypertree with some vertex of $G_0$ each time) such that the number of its edges equals $\varepsilon(G_0)+m$.
We will characterize the minimizing hypergraph(s) in $\mathcal{T}_m(G_0)$.

\begin{theorem}\label{min}
Let $G_0$ be a connected non-odd-bipartite $k$-uniform hypergraph.
If $G$ is a minimizing hypergraph in $\mathcal{T}_m(G_0)$, then $G=G_0(u) \diamond P_m(u)$ for some vertex $u$ of $G_0$.
\end{theorem}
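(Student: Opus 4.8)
The plan is to argue that a minimizing hypergraph $G\in\mathcal{T}_m(G_0)$ must have all of its "extra" $m$ edges arranged into a single hyperpath attached at one vertex of $G_0$, and then identify the attachment point. Write $G$ as $G_0$ with hypertrees attached at various vertices; since $G$ is connected and non-odd-bipartite (it contains $G_0$ as a non-odd-bipartite sub-hypergraph, and each attached hypertree is itself odd-bipartite as a power hypertree), Lemmas~\ref{prop2}, \ref{movedge}, \ref{splust}, and Corollary~\ref{gst} all apply to branches of $G$ that are power hypertrees. Let $x$ be a first eigenvector of $G$. The first step is to reduce to the case where each attached hypertree is in fact a hyperpath: if some attached hypertree $T$ at a vertex $w$ of $G_0$ is not a path, then $T$ contains a vertex $p$ of degree $\ge 3$ (in $T$), or more precisely a branching; one peels off a pendent hyperpath $P$ of $T$ rooted at some internal vertex $q$ of $T$ and relocates it. By Lemma~\ref{nonzero} and Lemma~\ref{increase}, if $x$ is nonzero somewhere beyond the branching point, it is nonzero and strictly increasing in modulus along the path toward the pendent vertices, so one can relocate $P$ from $q$ to a pendent vertex lying "further out," producing a hypergraph in $\mathcal{T}_m(G_0)$ whose least eigenvalue is no larger (Lemma~\ref{movedge}), and strictly smaller unless $x$ vanishes on the relevant vertices. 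Repeating, one may assume every attached hypertree is a hyperpath.

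The second step handles the case of several hyperpaths attached at possibly several vertices. If two hyperpaths $P_s^k$ and $P_t^k$ are attached at the same vertex $u$ of $G_0$, Lemma~\ref{splust} lets us merge them into a single path $P_{s+t}^k$ without increasing $\lambda_{\min}$; strict decrease holds when $x_u\ne0$. If two hyperpaths are attached at distinct vertices $v_1,v_2$ of $G_0$, use Lemma~\ref{movedge}: relocating the path attached at $v_2$ over to $v_1$ (choosing the labeling so that $|x_{v_1}|\ge|x_{v_2}|$, which is the WLOG in that lemma) does not increase $\lambda_{\min}$. Iterating these two moves collapses all attached paths onto one vertex and then into one path, so a minimizing hypergraph must be of the form $G_0(u)\diamond P_m^k(u)$. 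To finish, one must rule out the degenerate scenario where the eigenvector vanishes at every relevant vertex throughout the reduction, which is where the "equality'' clauses of Lemmas~\ref{movedge} and \ref{splust} do real work: in each merge/relocate step, equality forces $\tilde x$ (with the prescribed zeros) to be a first eigenvector of the new hypergraph, so even if the numerical value $\lambda_{\min}$ does not drop, the \emph{hypergraph} already achieving the minimum can be taken to be $G_0(u)\diamond P_m^k(u)$. Thus among all minimizers there is one of the claimed form; and since $\mathcal T_m(G_0)$ has finitely many members, "$G$ is a minimizing hypergraph'' then forces $G$ itself to be $G_0(u)\diamond P_m^k(u)$ for some $u$ (any minimizer shares the minimum value, and the reduction shows no hypergraph with a branching or with paths split among vertices can be a strict minimizer).

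I expect the main obstacle to be bookkeeping the equality cases cleanly: each application of Lemma~\ref{movedge} or Lemma~\ref{splust} gives "$\le$ with equality iff $x$ vanishes at the moved root and a certain transplanted vector is again a first eigenvector,'' and chaining these implications while keeping track of which first eigenvector one is working with (the eigenvector changes after each relocation) requires care. In particular, one should start from a \emph{fixed} minimizing $G$ and a \emph{fixed} first eigenvector $x$ of it, perform the relocation/merge producing $G'$ with $\lambda_{\min}(G')\le\lambda_{\min}(G)$, note $G'\in\mathcal T_m(G_0)$ so $\lambda_{\min}(G')\ge\lambda_{\min}(G)$, hence equality, hence the transplanted vector is a first eigenvector of $G'$, and then repeat with $G'$ in place of $G$. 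A secondary point to address is that when $x$ vanishes on an entire attached hyperpath, that path's removal reduces to $G_0$ and Lemma~\ref{prop2} keeps $\lambda_{\min}$ unchanged, so one can still reattach all $m$ edges as one path at any vertex without harm. The argument terminates because each step reduces the number of attached hypertrees or the number of branchings, a nonnegative integer quantity.
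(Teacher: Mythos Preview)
Your overall strategy---iteratively relocate odd-bipartite branches via Lemma~\ref{movedge}, steered by Lemmas~\ref{nonzero} and~\ref{increase}, until a single hyperpath remains---matches the paper's approach, and your use of Lemma~\ref{splust} to merge two paths at a common vertex is a legitimate variant of the paper's Case~3. However, two points need repair.

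First, a minor gap: the class $\mathcal{T}_m(G_0)$ allows arbitrary $k$-uniform hypertrees, not only power hypertrees. Your step~1 claims that a non-path hypertree must contain a vertex of degree~$\ge 3$; this is false in general (an edge can have three or more vertices each lying in another edge, with all vertex degrees~$\le 2$). Lemmas~\ref{nonzero} and~\ref{increase} are stated only for power hypertrees, so before invoking them you must first reduce each attached tree to a power hypertree. The paper does this in its Case~2 by a direct application of Lemma~\ref{movedge} inside an edge having three non-pendent vertices; you should insert the same move.

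Second, and more seriously, your handling of the equality cases does not prove the theorem as stated. The theorem says \emph{every} minimizer is of the form $G_0(u)\diamond P_m^k(u)$, not merely that some minimizer is. Your chain of relocations with ``$\le$, hence $=$'' only shows that from a given minimizer $G$ one can pass to a path-form minimizer $G''$; it does not rule out $G$ itself being non-path. The closing sentence (``no hypergraph with a branching \dots\ can be a strict minimizer'') is not an argument. What is missing is a place where strict inequality is forced. The paper supplies this in its Case~3: for the first eigenvector $x$ of the (reduced) hypergraph $G^{(2)}=G_0(u_0)\diamond T^{(2)}(u_0)$, one first argues $|x_{u_0}|=\max_{v\in V(G_0)}|x_v|$ (else relocate $T^{(2)}$ to a vertex with larger modulus and get strict~$<$), and then observes that some pendent vertex $w_0$ of $T^{(2)}$ has $x_{w_0}\neq 0$ (otherwise $x\equiv 0$, using Lemma~\ref{nonzero} and the maximality of $|x_{u_0}|$). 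Relocating an appropriate sub-branch to $w_0$ then yields strict~$<$ by the equality clause of Lemma~\ref{movedge}, because $x_{w_0}\neq 0$. This nonzero-pendent-vertex step is the crux your proposal identifies as ``the main obstacle'' but does not carry out; without it the contradiction never materializes.
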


\begin{proof}
Suppose that $G$ is a minimizing hypergraph in $\mathcal{T}_m(G_0)$, and $G$ has no the structure as desired in the theorem.
We will get a contradiction by the following three cases.

\textbf{Case 1:} $G$ contains hypertrees attached at two or more vertices of $G_0$.
Let $T_1$, $T_2$ be two hypertrees attached at $v_1,v_2$ of $G_0$ respectively.
Let $x$ be a first  eigenvector of $G$.
Assume $|x_{v_1}| \ge |x_{v_2}|$.
Relocating $T_2$ rooted at $v_2$ and attaching to $v_1$, we will get a hypergraph $\bar{G} \in \mathcal{T}_m(G_0)$ such
that $\la_{\min}(\bar{G}) \le \la_{\min}(G)$ by Lemma \ref{movedge}.
Repeating the above operation, we finally arrive at a hypergraph $G^{(1)}$ with only one hypertree $T^{(1)}$ attached at one vertex $u_0$ of $G_0$ such that
$\la_{\min}(G^{(1)}) \le \la_{\min}(G)$.

{\bf Case 2:} $T^{(1)}$ contains edges with three or more vertices of degree greater than one, i.e. $T^{(1)}$ is not a power hypertree.
Let $e$ be one of such edges containing $u,v,w$ with $d(u),d(v), d(w)$ all greater than one.
Let $x$ be a first  eigenvector of $G^{(1)}$, and assume that $|x_u| \ge |x_w|$.
Relocating the hypertree rooted at $w$ and attaching to $u$, we will get a hypergraph $\hat{G} \in \mathcal{T}_m(G_0)$ such
that $\la_{\min}(\hat{G}) \le \la_{\min}(G^{(1)})$ by Lemma \ref{movedge}.
Repeating the above operation on the edge $e$ until $e$ contains exactly $2$ vertices of degree greater than one, and on each other edges like $e$,
we finally arrive at a hypergraph $G^{(2)}$ such that the unique hypertree $T^{(2)}$ attached at $u_0$ is a power hypertree, and
$\la_{\min}(G^{(2)}) \le \la_{\min}(G^{(1)})$.

{\bf Case 3:} $T^{(2)}$ contains more than one pendent edges except the edge(s) containing $u_0$.
Let $x$ be a first  eigenvector of $G^{(2)}$.
We assert that $|x_{u_0}|=\max_{v \in V(G_0)}|x_v|$.
Otherwise, there exists a vertex $v_0$ of $G$ such that $|x_{v_0}|>|x_{u_0}|$.
Relocating $T^{(2)}$ rooted at $u_0$ and attaching to $v_0$, we will get a hypergraph $\tilde{G} \in \mathcal{T}_m(G_0)$ such
that $\la_{\min}(\tilde{G}) < \la_{\min}(G^{(2)})$ by Lemma \ref{movedge}.
Then $\la_{\min}(\tilde{G}) < \la_{\min}(G)$, a contradiction to $G$ being minimizing.
We also assert that there exists one pendent vertex $w_0$ of $T^{(2)}$ such that $x_{w_0} \ne 0$.
Otherwise by Lemma \ref{nonzero}, $x|_{T^{(2)}}=0$, in particular $x_{u_0}=0$, and hence $x=0$ by the first assertion, a contradiction.

Note that $T^{(2)}$ consists of $d_{T^{(2)}}(u_0)$ sub-hypertrees sharing a common vertex $u_0$.
Let $T^{(2)}_1$ be the sub-hypertrees of $T^{(2)}$ attached at $u_0$ which contains $w_0$.
If  $d_{T^{(2)}}(u_0)=1$, let $p$ be the furthest vertex of degree greater $2$ on the path starting from $u_0$ to $w_0$,
and let $T_p$ be the hypertree attached to $p$ which contains no vertices of the path except $p$.
Relocating $T_p$ rooted at $p$ and attaching to $w_0$,
we will arrive at a hypergraph still in $\mathcal{T}_m(G_0)$ but with a smaller least  eigenvalue by Lemma \ref{increase} and Lemma \ref{movedge}
regardless of $x_p$ being zero or not, a contradiction.
If  $d_{T^{(2)}}(u_0)>1$, let $T^{(2)}_2$ be the sub-hypertree of $T^{(2)}$  attached at $u_0$ which contains no $w_0$.
Relocating $T^{(2)}_2$ from $u_0$ and attaching to $w_0$,
we still arrive at a hypergraph in $\mathcal{T}_m(G_0)$ but with a smaller least  eigenvalue, also a contradiction.
The result now follows.
\end{proof}

\section{Least limit point of the least eigenvalues}
In this section we will investigate the upper bounds of the least  eigenvalues, from which we show that the
least limit point of the least  eigenvalues of connected non-odd-bipartite hypergraphs is zero.

\begin{lemma}\label{odd}
Let $G$ be a non-odd-bipartite $k$-uniform hypergraph.
Then $G$ contains an odd-bipartite sub-hypergraph with at least $\frac{\varepsilon(G)}{2}$ edges.
\end{lemma}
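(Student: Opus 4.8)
The plan is to run the hypergraph analogue of the classical ``every graph has a bipartite subgraph with at least half its edges'' argument, via averaging over all bipartitions of $V(G)$. Since odd-bipartiteness is only defined for even uniformity, $k$ is even here, and this is used twice: for an edge $e$ the two numbers $|e\cap V_1|$ and $|e\cap V_2|=k-|e\cap V_1|$ have the same parity, so a bipartition $\{V_1,V_2\}$ of $V(G)$ is an odd-bipartition of a $k$-uniform hypergraph precisely when every edge meets $V_1$ in an odd number of vertices.

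First I would record the elementary identity $\sum_{j\ \mathrm{odd}}\binom{k}{j}=2^{k-1}$, obtained by subtracting $(1-1)^k=0$ from $(1+1)^k=2^k$. Consequently, for a fixed $k$-set $e\subseteq V(G)$, the number of subsets $V_1\subseteq V(G)$ with $|e\cap V_1|$ odd equals $2^{k-1}\cdot 2^{\nu(G)-k}=2^{\nu(G)-1}$: one chooses $V_1\cap e$ of odd size in $2^{k-1}$ ways and the remaining part $V_1\setminus e$ freely. Now for each $V_1\subseteq V(G)$ let $N(V_1)$ be the number of edges $e\in E(G)$ with $|e\cap V_1|$ odd, and sum over all $2^{\nu(G)}$ subsets; interchanging the order of summation and using the count just made,
\[
\sum_{V_1\subseteq V(G)} N(V_1)=\sum_{e\in E(G)}\#\{V_1\subseteq V(G): |e\cap V_1|\ \mathrm{odd}\}=\varepsilon(G)\,2^{\nu(G)-1}.
\]
By averaging there is a subset $V_1$ with $N(V_1)\ge \varepsilon(G)/2$.

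Finally, set $V_2=V(G)\setminus V_1$ and let $G'$ be the sub-hypergraph of $G$ whose edge set is $\{e\in E(G): |e\cap V_1|\ \mathrm{odd}\}$, on the vertices it covers. Then $\varepsilon(G')=N(V_1)\ge \varepsilon(G)/2$, and restricting $\{V_1,V_2\}$ to $V(G')$ yields an odd-bipartition of $G'$: every edge $e$ of $G'$ meets $V_1$ in an odd (hence positive) number of vertices and, by the parity remark and $k$ even, meets $V_2$ in an odd (hence positive) number as well, so both classes are nonempty; if $G'$ has no edges, then $\varepsilon(G)\le 0$ and the statement is vacuous. Hence $G'$ is the required odd-bipartite sub-hypergraph. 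There is no real obstacle beyond bookkeeping; the only points needing care are invoking $k$ even so that the two intersection parities agree (making $\{V_1,V_2\}$ a genuine odd-bipartition with both parts nonempty) and the double-counting step.
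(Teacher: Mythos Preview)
Your argument is correct and is essentially the same as the paper's: both average over all $2^{\nu(G)}$ subsets $T\subseteq V(G)$ and use $\sum_{j\ \mathrm{odd}}\binom{k}{j}=2^{k-1}$ to show that each edge is odd-transversal for exactly half the subsets, so some subset achieves at least $\varepsilon(G)/2$. The only cosmetic difference is that the paper phrases this as a probabilistic expectation (choosing $T$ uniformly at random) rather than a direct double count, and does not spell out, as you do, why the resulting bipartition has both parts nonempty.
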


\begin{proof}
Let $T \subseteq V(G)$ be a random subset given by $\Pr[v\in T]=\frac{1}{2}$, these choices being mutually independent.
Set $B=V(G)\backslash T$.
Call an edge $e$ odd-transversal if exactly the cardinality of $e\cap T$ is odd. Let $X$ be the number of odd-transversal edges.
We decompose
$$X =\sum_{e\in E(G)}X_e,$$
where $X_e$ is the indicator random variable for $e$ being odd-transversal, i.e,
$X_e=1$ if $e$ is odd-transversal, and $X_e=0$ otherwise.
Then the expectation
\[
E[X_e]=\Pr[X_e] = \sum_{i \text{\footnotesize~is odd}, i\in[k]} {k \choose i} \left(\frac{1}{2}\right)^i\left(\frac{1}{2}\right)^{k-i}=\frac{1}{2}.
\]
So
$E(X) =\sum_{e\in E(G)}E(X_e)=\frac{\varepsilon(G)}{2}$.
Thus $X \ge \frac{\varepsilon(G)}{2}$ for some choice of $T$, and the set of those odd-transversal edges forms an odd-bipartite sub-hypergraph.
\end{proof}

\begin{theorem}\label{up1}
Let $G=G_0(u)\diamond H(u)$ be a connected non-odd-bipartite $k$-uniform hypergraph, where $H$ is odd-bipartite.
Then
$$\lambda_{\min}(G)\le  \frac{k\varepsilon(G_0)}{\nu(G)}.$$
\end{theorem}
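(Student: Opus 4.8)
The plan is to bound $\lambda_{\min}(G)$ from above by feeding a well-chosen unit vector into the variational formula Eq.~(\ref{form2}), $\lambda_{\min}(G)=\min_{\|x\|_k=1}\sum_{e\in E(G)}(x_e^k+kx^e)$. The key is to design the vector so that every edge of $H$, and a large fraction of the edges of $G_0$, contribute exactly $0$, leaving only a small leftover governed by $\varepsilon(G_0)$.

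First I would apply Lemma~\ref{odd} to $G_0$ (the proof of that lemma uses nothing about non-odd-bipartiteness, so it applies verbatim): there is a spanning sub-hypergraph $G_0'$ of $G_0$ together with a bipartition $\{V_1,V_2\}$ of $V(G_0)$ such that every edge of $G_0'$ meets $V_1$ (hence also $V_2$) in an odd number of vertices, and $\varepsilon(G_0')\ge \varepsilon(G_0)/2$. Put $E'':=E(G_0)\setminus E(G_0')$, so $|E''|\le\varepsilon(G_0)/2$. Relabelling the two sides if necessary, assume $u\in V_1$; let $\{U,W\}$ be an odd-bipartition of $H$ and, relabelling again if necessary, assume $u\in U$. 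Since $u\in V_1\cap U$, setting $t:=\nu(G)^{-1/k}>0$ the vector
\[
x_v:=\begin{cases} t,& v\in V_1\cup(U\setminus\{u\});\\ -t,& v\in V_2\cup W;\end{cases}
\]
is well defined on $V(G)$, and every coordinate has absolute value $t$, so $\|x\|_k^k=\nu(G)t^k=1$.

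Next I would evaluate $\sum_{e\in E(G)}(x_e^k+kx^e)$ edge by edge. Because $k$ is even, $x_v^k=t^k$ for every $v$, hence $x_e^k=kt^k$ for every edge $e$. For $e\in E(H)$ the number $|e\cap U|$ is odd, so $|e\cap W|=k-|e\cap U|$ is odd and $x^e=(-1)^{|e\cap W|}t^k=-t^k$, giving $x_e^k+kx^e=0$; the same computation applies to $e\in E(G_0')$ using that $|e\cap V_1|$ is odd. For the at most $\varepsilon(G_0)/2$ remaining edges $e\in E''$ we only use $x^e\le t^k$, so $x_e^k+kx^e\le 2kt^k$. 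Summing,
\[
\sum_{e\in E(G)}(x_e^k+kx^e)\le 2kt^k|E''|\le 2kt^k\cdot\frac{\varepsilon(G_0)}{2}=k\varepsilon(G_0)t^k=\frac{k\varepsilon(G_0)}{\nu(G)},
\]
and Eq.~(\ref{form2}) then gives $\lambda_{\min}(G)\le \frac{k\varepsilon(G_0)}{\nu(G)}$.

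I expect this argument to be essentially routine. The only point that needs care is arranging the two bipartitions to agree at the shared vertex $u$ (so that $x$ is well defined), together with the parity bookkeeping $(-1)^{k-(\text{odd})}=-1$, which is exactly where the evenness of $k$ enters. I do not anticipate any genuine obstacle beyond this.
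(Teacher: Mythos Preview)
Your proof is correct and follows essentially the same approach as the paper's own proof: both apply Lemma~\ref{odd} to $G_0$, combine the resulting bipartition with an odd-bipartition of $H$, plug the corresponding $\pm 1$ vector into the variational formula, and observe that only the non-odd-transversal edges of $G_0$ (at most $\varepsilon(G_0)/2$ of them, each contributing $2k$) survive. Your treatment is in fact slightly more careful, since you explicitly arrange $u\in V_1\cap U$ so that the test vector is well defined at the coalescence vertex; the paper leaves this implicit (one can always swap $T$ with its complement, which preserves the odd-transversal edges since $k$ is even). Incidentally, your parenthetical about Lemma~\ref{odd} is unnecessary: $G_0$ is automatically non-odd-bipartite here, as an odd-bipartition of $G_0$ combined with one of $H$ would give one of $G$.
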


\begin{proof}
By Lemma \ref{odd}, there is a proper subset $T$ of $V(G_0)$ such that the number of odd transversal edges of $G_0$ respect to $T$ is at least $\frac{\varepsilon(G_0)}{2}$.
Let $\{U,W\}$ be an odd-bipartition of $H$, where $u\in U$.
Define $x$ by
\[
x_v=
\begin{cases}
1,&\text{if } v\in T\cup U;\\
-1,&\text{otherwise}.
\end{cases}
\]
Then $\|x\|_k^k=\nu(G)$.
We write $e\sim T$ (or $e\nsim T$) to denote that $e$ is odd-transversal (or not odd-transversal) respect to $T$.
By Eq. (\ref{form2}) and Lemma \ref{odd},
\begin{eqnarray*}
\lambda_{\min}(G)&\le & \frac{\Q x^k} {\|x\|_k^k}\\
            &=& \frac{1}{\nu(G)} \left(\sum_{e\in E(G_0), e\nsim T}(x_e^{k}+kx^e)+\sum_{e\in E(G_0), e\sim T}(x_e^{k}+kx^e)
            +\sum_{e\in E(H)}(x_e^{k}+kx^e)\right)\\
            &= & \frac{1}{\nu(G)} \sum_{e\in E(G_0), e\nsim T}2k\\
              &\le& \frac{k\varepsilon(G_0)}{\nu(G)}.
\end{eqnarray*}
\end{proof}

\begin{theorem}\label{up2}
Let $G=G_0(u)\diamond H(u)$ be a connected non-odd-bipartite $k$-uniform hypergraph, where $H$ is odd-bipartite.
 Then
$$\lambda_{\min}(G)\le  \frac{d_{G_0}(u)}{\nu(H)}.$$
\end{theorem}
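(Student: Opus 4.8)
The plan is to produce an explicit unit test vector supported essentially on $H$ and read off the bound from the variational formula Eq.~(\ref{form2}). Fix an odd-bipartition $\{U,W\}$ of the odd-bipartite hypergraph $H$, chosen so that the root $u$ lies in $U$. Define $x$ on $V(G)$ by setting $x_v=0$ for every $v\in V(G_0)\setminus\{u\}$, and $x_v=c$ for $v\in U$, $x_v=-c$ for $v\in W$, where $c=\nu(H)^{-1/k}$. Since $k$ is even, every vertex of $H$ (the root $u$ included) then satisfies $|x_v|^k=c^k=1/\nu(H)$, while all other vertices contribute $0$, so that $\|x\|_k^k=\nu(H)c^k=1$.

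Next I would evaluate $\Q(G)x^k=\sum_{e\in E(G)}(x_e^k+kx^e)$ edge by edge, using the partition $E(G)=E(G_0)\cup E(H)$ afforded by the coalescence at $u$. For $e\in E(H)$ the odd-bipartition property forces $|e\cap W|$ to be odd, hence $x^e=(-1)^{|e\cap W|}c^k=-c^k$, while $x_e^k=\sum_{v\in e}x_v^k=kc^k$; thus each such edge contributes $x_e^k+kx^e=0$. For $e\in E(G_0)$ not containing $u$, all vertices of $e$ lie in $V(G_0)\setminus\{u\}$ and carry value $0$, so the contribution is $0$. For $e\in E(G_0)$ containing $u$, the remaining $k-1$ vertices of $e$ lie in $V(G_0)\setminus\{u\}$ and carry value $0$, so $x^e=0$ and $x_e^k=x_u^k=c^k$; there are exactly $d_{G_0}(u)$ such edges, each contributing $c^k$.

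Summing these contributions gives $\Q(G)x^k=d_{G_0}(u)\,c^k=d_{G_0}(u)/\nu(H)$, and since $\|x\|_k=1$, Eq.~(\ref{form2}) yields $\lambda_{\min}(G)\le\Q(G)x^k=d_{G_0}(u)/\nu(H)$, as claimed. There is no substantial obstacle here beyond bookkeeping the edge contributions correctly; the two points that need care are that every vertex of $H$—including $u$—receives absolute value $c$, so that the normalization is exactly $\nu(H)^{1/k}c=1$, and that it is precisely the odd-bipartition of $H$ that makes each $H$-edge contribute zero rather than something positive.
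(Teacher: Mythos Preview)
Your proof is correct and essentially identical to the paper's: both use the test vector that is $\pm 1$ on the two parts of an odd-bipartition of $H$ (with $u\in U$) and $0$ on $V(G_0)\setminus\{u\}$, then read off the bound from Eq.~(\ref{form2}). The only cosmetic difference is that you pre-normalize by the factor $c=\nu(H)^{-1/k}$, whereas the paper uses values $\pm 1$ and divides by $\|x\|_k^k=\nu(H)$ at the end.
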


\begin{proof}
Let $\{U,W\}$ be an odd-bipartition of $H$, where $u\in U$. Define $x$ by
\[
x_v=
\begin{cases}
1,&\text{if } v\in U;\\
-1,&\text{if } v\in W;\\
0,&\text{otherwise}.
\end{cases}
\]
Then $\|x\|_k^k=\nu(H)$, and
\begin{eqnarray*}
\lambda_{\min}(G)&\le & \frac{\Q x^k} {\|x\|_k^k}\\
            &=& \frac{1}{\nu(H)}\left(\sum_{e\in E(G_0)}(x_e^{k}+kx^e)+\sum_{e\in E(H)}(x_e^{k}+kx^e)\right)\\
            &=& \frac{d_{G_0}(u)}{\nu(H)}.
\end{eqnarray*}
\end{proof}

\begin{remark}\label{rmk2}
In Theorem \ref{up1}, the upper bound
$$\frac{k\varepsilon(G_0)}{\nu(G)}=\frac{k\varepsilon(G_0)}{\nu(G_0)}\frac{\nu(G_0)}{\nu(G)}=d(G_0)\frac{\nu(G_0)}{\nu(G)},$$
where $d(G_0)$ is the average degree of the vertices of $G_0$.
So, if fixing $G_0$, and letting $H$ have enough vertices, then the bounds in Theorems \ref{up1} and \ref{up2} will be much smaller than $\delta(G)$, the upper bound in Lemma \ref{mindegree}.
\end{remark}

By Theorem \ref{up2} and Lemma \ref{tree}, we get the following result immediately.

\begin{coro}\label{up3}
Let $G=G_0(u)\diamond T_m(u)$ be a connected non-odd-bipartite $k$-uniform hypergraph, where $T_m(u)$ is a hypertree with $m$ edges. Then
$$\lambda_{\min}(G)\le  \frac{d_{G_0}(u)}{(k-1)m+1}.$$
\end{coro}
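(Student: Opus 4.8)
The plan is to derive Corollary \ref{up3} as an immediate consequence of Theorem \ref{up2} together with the counting identity for hypertrees in Lemma \ref{tree}. Since $T_m(u)$ is a $k$-uniform hypertree with $m$ edges, Lemma \ref{tree} gives $\varepsilon(T_m(u)) = \frac{\nu(T_m(u))-1}{k-1}$, which upon solving for the order yields $\nu(T_m(u)) = (k-1)m+1$. This is the only nontrivial input; the rest is substitution.

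Next I would observe that a hypertree is in particular odd-bipartite. This needs a brief justification: in a $k$-uniform hypertree one can two-color the vertices so that every edge meets each color class in an odd number of vertices — for instance, since $k$ is even, orient the tree structure and assign colors by a parity argument along the unique paths, or simply cite that power hypertrees and more generally hypertrees admit such a bipartition (this is implicit in the paper's framework, where hypertrees are the natural odd-bipartite objects being attached). Actually, the cleanest route is to note that the statement only requires $T_m(u)$ to play the role of the odd-bipartite branch $H$ in Theorem \ref{up2}; if the paper intends $T_m(u)$ to be a \emph{power} hypertree (which is the case used throughout Section 4) then odd-bipartiteness is automatic, since the $k$-th power of any tree is odd-bipartite for even $k$ (color an added vertex and the two original endpoints of each edge to achieve odd intersections).

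Then I would simply apply Theorem \ref{up2} with $H = T_m(u)$ rooted at $u$: it gives $\lambda_{\min}(G) \le \frac{d_{G_0}(u)}{\nu(H)} = \frac{d_{G_0}(u)}{\nu(T_m(u))}$, and substituting $\nu(T_m(u)) = (k-1)m+1$ from the previous step produces exactly
\[
\lambda_{\min}(G) \le \frac{d_{G_0}(u)}{(k-1)m+1}.
\]
I expect no real obstacle here — the "proof" is a one-line combination of two already-established results, which is presumably why the authors phrased it as "we get the following result immediately." The only point requiring any care is making sure $T_m(u)$ qualifies as the odd-bipartite hypergraph $H$ demanded by Theorem \ref{up2}; everything else is the arithmetic identity $\nu = (k-1)m+1$ for a $k$-uniform hypertree with $m$ edges.
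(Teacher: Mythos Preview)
Your proposal is correct and matches the paper's own argument exactly: the paper simply states that the corollary follows immediately from Theorem~\ref{up2} and Lemma~\ref{tree}, which is precisely the substitution $\nu(T_m(u))=(k-1)m+1$ into the bound $\lambda_{\min}(G)\le d_{G_0}(u)/\nu(H)$. Your only extra content is the justification that $T_m(u)$ is odd-bipartite, which the paper takes for granted; note that this holds for \emph{any} $k$-uniform hypertree with $k$ even (by induction on pendent edges), so your hedging toward power hypertrees is unnecessary.
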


By Lemma \ref{prop2}, $\lambda_{\min}(G_0(r)\diamond P_m^k(r))$ and $\lambda_{\min}(G_0(r)\diamond S_m^k(r))$  are both decreasing in $m$,
which implies that they have limits.
By Corollary \ref{up3}, those two limits are both $0$.
As for  a connected non-odd-bipartite hypergraph, its least  eigenvalue is greater than $0$.
So we get the following result.

\begin{coro}\label{last}
Zero is the least limit point of the least  eigenvalues of connected non-odd-bipartite hypergraphs.
\end{coro}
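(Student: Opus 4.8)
The plan is to derive Corollary~\ref{last} by combining the monotonicity of the least eigenvalue under path/star attachment with the explicit upper bound in Corollary~\ref{up3}, and with the positivity of the least eigenvalue for connected non-odd-bipartite hypergraphs. First I would fix any connected non-odd-bipartite $k$-uniform hypergraph $G_0$ and a vertex $r$ of $G_0$, and consider the sequence $G_m := G_0(r)\diamond P_m^k(r)$ for $m=1,2,\ldots$ Each $G_m$ is again connected and non-odd-bipartite (attaching a hyperpath, which is odd-bipartite, cannot destroy non-odd-bipartiteness of $G_0$, and cannot create it in $G_0$ either, so $G_m$ is non-odd-bipartite). By Lemma~\ref{prop2} applied with $G_0$ replaced by $G_m$ and $H$ the extra pendent edge turning $G_m$ into $G_{m+1}$, the sequence $\lambda_{\min}(G_m)$ is non-increasing in $m$; being bounded below by $0$ (by the remarks in the introduction, or by Lemma~\ref{semi}(1) together with positive semidefiniteness), it has a limit $L\ge 0$.

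Next I would invoke Corollary~\ref{up3}: for every $m$ we have $\lambda_{\min}(G_m)\le d_{G_0}(r)/((k-1)m+1)$, and the right-hand side tends to $0$ as $m\to\infty$. Hence $L=0$, so $0$ is a limit point of the set $\mathcal{S}$ of least eigenvalues of connected non-odd-bipartite hypergraphs. The same argument works verbatim with $S_m^k$ in place of $P_m^k$, though one copy suffices.

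It then remains to argue that $0$ is the \emph{least} limit point, i.e. that no negative number is a limit point of $\mathcal{S}$. This is immediate: every element of $\mathcal{S}$ is strictly positive --- for a connected non-odd-bipartite hypergraph $G$ with even uniformity, $\Q(G)$ is positive semidefinite and zero is not an H-eigenvalue (Shao et al.\ \cite{SSW}, cited in the introduction), so $\lambda_{\min}(G)>0$. Thus $\mathcal{S}\subseteq(0,\infty)$, and no point $\le 0$ other than $0$ itself can be a limit point of $\mathcal{S}$ (a limit point of a subset of $(0,\infty)$ lies in $[0,\infty)$). Combining the two paragraphs, $0\in[0,\infty)$ is a limit point of $\mathcal{S}$ and nothing smaller is, which is exactly the assertion.

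I do not anticipate a genuine obstacle here; the only point requiring a little care is the \emph{existence} step --- one must exhibit an actual infinite family whose least eigenvalues converge to $0$ --- and the families $G_0(r)\diamond P_m^k(r)$ (or $G_0(r)\diamond S_m^k(r)$) already constructed above do the job, with monotonicity from Lemma~\ref{prop2} guaranteeing convergence of the sequence and Corollary~\ref{up3} pinning the limit at $0$. Everything else is a formal consequence of $\mathcal{S}\subseteq(0,\infty)$.
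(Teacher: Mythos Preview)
Your proposal is correct and follows essentially the same route as the paper: the paper also uses Lemma~\ref{prop2} to get monotonicity of $\lambda_{\min}(G_0(r)\diamond P_m^k(r))$ (and $G_0(r)\diamond S_m^k(r)$) in $m$, then Corollary~\ref{up3} to force the limit to be $0$, and finally the positivity of $\lambda_{\min}$ for connected non-odd-bipartite hypergraphs to conclude that $0$ is the least limit point. Your write-up just fills in a few details (e.g., why $G_m$ remains non-odd-bipartite, why no negative number can be a limit point) that the paper leaves implicit.
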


\bibliographystyle{amsplain}

\begin{thebibliography}{10}


\bibitem{Ber1} C. Berge, \emph{Hypergraphs: Combinatorics of finite sets}, North-Holland, 1989.

\bibitem{CPZ} K. C. Chang, K. Pearson, T. Zhang, Perron-Frobenius theorem for nonnegative tensors, \emph{Commu. Math. Sci.}, \textbf{6}(2008), 507-520.


\bibitem{CD} J. Cooper, A. Dutle, Spectra of uniform hypergraphs, \emph{Linear Algebra Appl.}, \textbf{436}(9)(2012), 3268-3292.

\bibitem{FKT} Y.-Z. Fan, M. Khan, Y.-Y. Tan, The largest H-eigenvalue and spectral radius of Laplaican tensor of non-odd-bipartite generalized power hypergraphs,
\emph{Linear Algebra Appl.}, \textbf{504}(2016), 487-502.



\bibitem{FY} Y.-Z. Fan, Y. Wang, Y.-H. Bao, J.-C. Wan, M. Li, Z. Zhu,
Eigenvectors of Laplacian or signless Laplacian of hypergraphs associated with zero eigenvalue,  Available at arXiv: 1807.00544v2.

\bibitem{FGH} S. Friedland, S. Gaubert, L. Han, Perron-Frobenius theorem for nonnegative multilinear forms and extensions,
 \emph{Linear Algebra Appl.}, \textbf{438}(2013), 738-749.

\bibitem{Hu} S. Hu, L. Qi, The eigenvectors associated with the zero eigenvalues of the Laplacian and signless Laplacian tensors of a uniform hypergraph,
\emph{Discrete Appl. Math.}, \textbf{169}(2014), 140-151.


\bibitem{HQS} S. Hu, L. Qi, J. Y. Shao, Cored hypergraphs, power hypergraphs and their Laplacian H-eigenvalues, \emph{Linear Algebra Appl.}, \textbf{439} (2013), 2980-2998.


\bibitem{HQX} S. Hu, L. Qi, J. Xie, The largest Laplacian and signless Laplacian H-eigenvalues of a uniform hypergraph,  \emph{Linear Algebra Appl.}, \textbf{469}(2015), 1-27.

\bibitem{KF} M. Khan, Y.-Z. Fan, On the spectral radius of a class of non-odd-bipartite even uniform hypergraphs, \emph{Linear Algebra Appl.},  \textbf{480}(2015), 93-106.

\bibitem{KF2} M. Khan, Y.-Z. Fan, Y.-Y. Tan, The H-spectra of a class of generalized power hypergraphs, \emph{Discrete Math.}, \textbf{339}(2016), 1682-1689.



\bibitem{LSQ} H. Li, J.-Y. Shao, L. Qi, The extremal spectral radii of k-uniform supertrees, \emph{J. Comb. Optim.}, \textbf{32}(2016), 741-764.


\bibitem{Lim} L.-H. Lim, Singular values and eigenvalues of tensors: a variational approach,
\emph{Proceedings of the 1st IEEE International Workshop on Computational Advances in Multi-Sensor Adaptive Processing}, 2005, pp. 129-132.



\bibitem{LMZW} H. Lin, B. Mo, B. Zhou, W. Weng, Sharp bounds for ordinary and signless Laplacian spectral radii of uniform hypergraphs,
\emph{Appl. Math. Comput.}, \textbf{285}(2016), 217-227.




\bibitem{Ni} V. Nikiforov, Hypergraphs and hypermatrices with symmetric spectrum,  \emph{Linear Algebra Appl.}, \textbf{519}(2017), 1-18.

\bibitem{Qi2} L. Qi, Eigenvalues of a real supersymmetric tensor, \emph{J. Symbolic Comput.}, \textbf{40}(6)(2005), 1302-1324.

\bibitem{Qi} L. Qi, H$^+$-eigenvalues of Laplacian and signless Laplacian tensor, \emph{Commu. Math. Sci.}, \textbf{12}(2014), 1045-1064.


\bibitem{SSW} J.-Y. Shao, H.-Y. Shan and B.-F. Wu, Some spectral properties and characterizations of
connected odd-bipartite uniform hypergraphs, \emph{Linear Multilinear Algebra}, \textbf{63}(2015), 2359-2372.

\bibitem{WF} Y. Wang, Y.-Z. Fan, The least eigenvalue of signless Laplacian of graphs under perturbation, \emph{Linear Algebra Appl.}, \textbf{436} (2012), 2084-2092.

\bibitem{YY1} Y. Yang and Q. Yang,
Further results for Perron-Frobenius theorem for nonnegative tensors, \emph{SIAM J Matrix Anal. Appl.}, \textbf{31}(5)(2010), 2517-2530.

\bibitem{YY2} Y. Yang and Q. Yang,
Further results for Perron-Frobenius theorem for nonnegative tensors II, \emph{SIAM J Matrix Anal. Appl.}, \textbf{32}(4)(2011), 1236-1250.

\bibitem{YY3} Y. Yang, Q. Yang, On some properties of nonnegative weakly irreducible tensors, Available at arXiv: 1111.0713v2.


\bibitem{YQS} X. Yuan, L. Qi, J. Shao, The proof of a conjecture on largest Laplacian and signless Laplacian H-eigenvalues of uniform hypergraphs, \emph{ Linear Algebra Appl.}, \textbf{490}(2016), 18-30.



\bibitem{ZQZ} L. Zhang, L. Qi, G. Zhou, M-tensors and some applications, \emph{SIAM J Matrix Anal. Appl.}, \textbf{35}(2)(2014), 437-452.

\end{thebibliography}

\end{document}